\newtheorem{statement}{Statement}
\newtheorem{example}{Example}
\newtheorem{theorem}{Theorem}
\newtheorem{proposition}{Proposition}
\newtheorem{lemma}{Lemma}
\newtheorem{corollary}{Corollary}
\newtheorem{example}{Example}
\newtheorem{statement}{Statement}
\crefname{problem}{Problem}{Problems}
\crefname{statement}{Statement}{Statements}
\newcommand{\parag}[1]{\paragraph{#1}}
\crefname{question}{Question}{Questions}
\crefname{step}{Step}{Steps}
\crefname{claim}{Claim}{Claims}
\crefname{problem}{Problem}{Problems}
\crefname{observation}{Observation}{Observations}
\crefname{statement}{Statement}{Statements}
\newcommand{\parag}[1]{\paragraph{#1.}}
 \patchcmd\Gread@eps{\@inputcheck#1 }{\@inputcheck"#1"\relax}{}{}
\tikzstyle{startstop} = [rectangle, rounded corners, 
\newcommand{\MP}{\text{MP}}
\def\P{\mathcal{P}}
\def\NP{\mathcal{NP}}
\def\BPP{\mathcal{BPP}}
\newcommand{\ie}{i.e., }
\renewcommand{\S}{\mathcal S}
\DeclareMathOperator{\gap}{gap}
\DeclareMathOperator{\mh}{mh}
\DeclareMathOperator{\tw}{tw}
\DeclareMathOperator{\BQP}{BQP}
\DeclareMathOperator{\xc}{xc}
\def\R{{\mathbb R}}
\newcommand{\bra}[1]{\left\{#1\right\}}
\newcommand{\N}{\mathcal N}
\DeclareMathOperator{\poly}{poly}
\newcommand{\PBS}{\text{PBS}}
\newcommand{\PBP}{\text{PBP}}
\title{Beyond hypergraph acyclicity: limits of tractability for pseudo-Boolean optimization}
\author{Alberto Del Pia
\thanks{Department of Industrial and Systems Engineering \& Wisconsin Institute for Discovery,
             University of Wisconsin-Madison.
             E-mail: {\tt delpia@wisc.edu}.
             }
\and
Aida Khajavirad
\thanks{Department of Industrial and Systems Engineering,
             Lehigh University.
             E-mail: {\tt aida@lehigh.edu}.
             }
}
\date{January 31, 2025}
\begin{document}

\maketitle

\begin{abstract}
In this paper, we study the problem of minimizing a polynomial function with literals over all binary points, often referred to as \emph{pseudo-Boolean optimization.}
We investigate the fundamental limits of computation for this problem by providing new necessary conditions and sufficient conditions for tractability.
On the one hand, we obtain the first intractability results, in the best-case sense, for pseudo-Boolean optimization problems on signed hypergraphs with bounded rank, in terms of the treewidth of the intersection graph. 
Namely, first, under some mild assumptions, we show that for every sequence of hypergraphs indexed by the treewidth and with bounded rank, the complexity of solving the associated pseudo-Boolean optimization problem grows super-polynomially in the treewidth. Second, we show that any hypergraph of bounded rank is the underlying hypergraph of some signed hypergraph for which the corresponding pseudo-Boolean polytope has an exponential extension complexity in the treewidth.
On the other hand, we introduce the nest-set gap, a new hypergraph-theoretic notion that enables us to define a notion of ``distance'' from the hypergaph acyclicity. We prove that if this distance is bounded, the pseudo-Boolean polytope admits a polynomial-size extended formulation. This in turn enables us to obtain a polynomial-time algorithm for a large class of pseudo-Boolean optimization problems whose underlying hypergraphs contain $\beta$-cycles.
\end{abstract}

\newcommand{\kywrds}{binary polynomial optimization; pseudo-Boolean optimization; treewidth; pseudo-Boolean polytope; extended formulations; extension complexity}

\ifthenelse {\boolean{SIOPT}}
{
\begin{keywords}
\kywrds
\end{keywords}

\begin{AMS}
90C09, 90C10, 90C26
\end{AMS}
}{
\emph{Key words:} \kywrds
}


\section{Introduction}
Binary polynomial optimization, \ie the problem of maximizing a multivariate polynomial function over the set of binary points, is a fundamental NP-hard problem in discrete optimization. In this paper, we consider a formulation that encodes the objective function using pseudo-Boolean functions, often referred to as pseudo-Boolean optimization. To formally define the problem, we make use of signed hypergraphs, a representation scheme that was recently introduced in~\cite{dPKha23}. Recall that a \emph{hypergraph} $G$ is a pair $(V,E)$, where $V$ is a finite set of nodes and $E$ is a set of subsets of $V$ of cardinality at least two, called the edges of $G$. 
A \emph{signed hypergraph} $H$ is a pair $(V,S)$, where $V$ is a finite set of nodes and $S$ is a set of signed edges.
A \emph{signed edge} $s \in S$ is a pair $(e_s,\eta_s)$, where $e_s$ is a subset of $V$ of cardinality at least two, and $\eta_s$ is a map that assigns to each $v \in e_s$ a \emph{sign} $\eta_s(v) \in \{-1,+1\}$.
The \emph{underlying edge} of a signed edge $s=(e_s,\eta_s)$ is $e_s$.
It is important to remark that the signed hypergraphs we consider in this paper often contain several signed edges with a common underlying edge but different signs.
With any signed hypergraph $H = (V,S)$, and cost vector $c \in \R^{V \cup S}$, we associate the \emph{pseudo-Boolean optimization problem:}
\begin{align}
\label[problem]{prob PBO}
\tag{PBO}
\begin{split}
\max & \qquad \sum_{v\in V} {c_v z_v} + \sum_{s \in S} c_s \prod_{v \in e_s} \sigma_s(z_v) \\
{\rm s.t.} & \qquad z \in \{0,1\}^V,
\end{split}
\end{align}
where 
$$
\sigma_s(z_v) := 
\begin{cases}
z_v & \text{ if } \eta_s(v) = +1 \\ 
1-z_v & \text{ if } \eta_s(v) = -1,
\end{cases}
$$
and without loss of generality we assume $c_s \neq 0$ for all $s \in S$.
Several applications such as maximum satisfiability problems~\cite{goemans94}, and graphical models~\cite{jordan04} can naturally be formulated as pseudo-Boolean optimization problems. 
See~\cite{BorHam02} for a review of existing results on pseudo-Boolean optimization.
We then linearize the objective function of~\cref{prob PBO} by introducing a variable $z_s$, for each signed edge $s \in S$, to obtain an equivalent formulation in a lifted space:
\begin{align}
\label[problem]{prob LPBO}
\tag{L-PBO}
\begin{split}
\max & \qquad \sum_{v\in V} {c_v z_v} + \sum_{s \in S} {c_s z_s} \\
{\rm s.t.} & \qquad z_s = \prod_{v \in e_s} {\sigma_s(z_v)} \qquad \forall s \in S \\
& \qquad z \in \{0,1\}^{V \cup S}.
\end{split}
\end{align}
In~\cite{dPKha23}, the authors introduced the~\emph{pseudo-Boolean set} 
of the signed hypergraph $H=(V,S)$, as the feasible region of \cref{prob LPBO}:
\begin{equation*} 
\PBS(H) := 
\Big\{ z \in \{0,1\}^{V \cup S} : z_s = \prod_{v \in e_s} {\sigma_s(z_v)}, \; \forall s \in S \Big\},
\end{equation*}
and they refer to its convex hull as the \emph{pseudo-Boolean polytope} and denote it by $\PBP(H)$. 
With each signed hypergraph $H=(V,S)$, we associate two key hypergraphs: 
\begin{itemize}
\item [(i)] The~\emph{underlying hypergraph} of $H$, which is the hypergraph obtained from $H$ by ignoring the signs. 
Formally, it is the hypergraph $(V,E)$, where $E := \bra{e_s : s = (e_s,\eta_s) \in S}$.
\item [(ii)] The \emph{multilinear hypergraph} of $H$, which is the hypergraph $\mh(H) = (V,E)$, where $E$ is constructed as follows: For each $s \in S$, and every $t \subseteq e_s$ with $\eta_s(v)=-1$ for all $v \in t$, the set $E$ contains $\{v \in e_s : \eta_s(v)=+1\} \cup t$, if this set has cardinality at least two.
\end{itemize}

Let us consider an important special case of~\cref{prob PBO} obtained by letting $\eta_s(v) = +1$ for every $s \in S$ and $v \in e_s$. In this case, a signed hypergraph $H = (V,S)$ essentially coincides with its underlying hypergraph $G=(V,E)$, and hence~\cref{prob PBO} can be equivalently written as the following \emph{binary multilinear optimization problem}:
\begin{align}
\label[problem]{prob BPO}
\tag{BMO}
\begin{split}
\max & \qquad \sum_{v\in V} {c_v z_v} + \sum_{e\in E} {c_e \prod_{v\in e} {z_v}} \\
{\rm s.t.} & \qquad z \in \{0,1\}^V,
\end{split}
\end{align}
where again without loss of generality we assume $c_e \neq 0$ for all $e \in E$.  
In~\cite{dPKha17MOR}, the authors defined the \emph{multilinear set} as the feasible region of a linearized multilinear optimization problem: 
\begin{equation*}
\S(G):= \Big\{ z \in \{0,1\}^{V \cup E} : z_e = \prod_{v \in e} {z_{v}}, \; \forall e \in E \Big\},
\end{equation*}
and referred to its convex hull as the~\emph{multilinear polytope} $\MP(G)$. Notice that by expanding the objective function of~\cref{prob PBO} over a signed hypergraph $H$, this problem can be reformulated in the form of~\cref{prob BPO} over the multilinear hypergraph of $H$; \ie $\mh(H)$. 
In the special case where $|e|=2$ for all $e \in E$,~\cref{prob BPO} simplifies to a~\emph{binary quadratic optimization (BQO)} problem and the multilinear polytope coincides with the well-known Boolean quadric polytope introduced by Padberg~\cite{Pad89} and later studied by others.

\medskip

In this paper, we investigate the fundamental limits of computation for pseudo-Boolean optimization. Via innovative applications of the inflation operation introduced in~\cite{dPKha23}, we provide the first necessary conditions in the best-case sense, as well as new sufficient conditions for the tractability of this class of optimization problems.


\subsection{Extended formulations and polynomial-time solvable classes}

In the following, we review the literature on polynomial-size extended formulations for the multilinear polytope and the pseudo-Boolean polytope. 
If a polynomial-size extended formulation of $\PBP(H)$ (resp. $\MP(G)$) can be constructed in polynomial time, then~\cref{prob PBO} (resp.~\cref{prob BPO}) can be solved in polynomial time by solving a linear program. 
However, in general, the converse may not hold. The \emph{extension complexity} of a polytope is defined as the minimum number of inequalities and equalities in an extended formulation of the polytope. Rothvoss~\cite{rothvoss17} showed that the matching polytope has an exponential extension complexity, whereas the maximum weight matching problem can be solved in polynomial time. Yet, extension complexity is a quantity of great interest in discrete optimization, as, for example, a super-polynomial extension complexity implies that the optimization problem cannot be formulated as a polynomial-size linear program.

\parag{Acyclic hypergraphs}
In~\cite{Pad89}, Padberg proved that if $G=(V,E)$ is an acyclic graph, then the Boolean quadric polytope $\BQP(G)$  
is defined by $4|E|$ inequalities in the original space.
The notion of graph acyclicity has been extended to several different notions of hypergraph acyclicity; in increasing order of generality, one can name Berge-acyclicity, $\gamma$-acyclicity, $\beta$-acyclicity, and $\alpha$-acyclicity.
We should remark that polynomial-time algorithms for determining acyclicity level of hypergraphs are available~\cite{fag83}.
In~\cite{dPKha18SIOPT,BucCraRod18,dPDiG23ALG,dPKha23mMPA}, the authors obtained a complete characterization of acyclic hypergraphs whose multilinear polytopes admit polynomial-size extended formulations.  
Henceforth, we denote by $r$ the \emph{rank} of the hypergraph $G=(V,E)$, defined as the maximum cardinality of an edge in $E$.
In~\cite{dPKha18SIOPT,BucCraRod18}, the authors proved that if $G$ is Berge-acyclic, then
$\MP(G)$ is defined by $|V|+(r+2)|E|$ inequalities in the original space.
Moreover, in~\cite{dPKha18SIOPT}, the authors proved that if $G$ is
$\gamma$-acyclic, then $\MP(G)$ has
a polynomial-size extended formulation with
at most $|V|+2|E|$ variables and at most $|V|+(r+2)|E|$ inequalities.
In~\cite{dPKha23mMPA}, the authors present a polynomial-size extended formulation for the multilinear polytope of $\beta$-acyclic hypergraphs  with at most $(r-1)|V|+|E|$ variables and at most $(3r-4)|V|+4|E|$ inequalities. In~\cite{dPKha23}, the authors introduced the pseudo-Boolean polytope and subsequently generalized the earlier results by proving that if the underlying hypergraph of a signed hypergraph $H=(V,S)$ is $\beta$-acyclic, then $\PBP(H)$ has a polynomial-size extended formulation with $O(r|V||S|)$ variables and inequalities. Note that this result is more general than the previous ones because it only requires the $\beta$-acyclicity of the underlying hypergraph of $H$. Indeed, the multilinear hypergraph of $H$ may contain many $\beta$-cycles. 
On the other hand, in~\cite{dPDiG23ALG}, the authors prove that~\cref{prob BPO} is strongly NP-hard over $\alpha$-acyclic hypergraphs.
This result implies that, unless $\P = \NP$, one cannot construct a polynomial-size extended formulation for the multilinear polytope of $\alpha$-acyclic hypergraphs.
However, in~\cite{dPKha21MOR}, the authors showed that if the rank $r$ of an $\alpha$-acyclic hypergraph is upper bounded by the logarithm of a polynomial in the size of the hypergraph, then the multilinear polytope has a polynomial-size extended formulation with $O(2^r|V|)$ variables and inequalities.

\parag{Bounded treewidth} Another celebrated line of research relates the complexity of combinatorial optimization problems to the treewidth of a corresponding graph. In~\cite{WaiJor04,Lau09,BieMun18} the authors proved that if the intersection graph of the hypergraph has a bounded treewidth, then the multilinear polytope has a polynomial-size extended formulation. 
Recall that given a hypergraph $G=(V,E)$, the \emph{intersection graph of $G$} is the graph with node set $V$, and where two nodes
$v,v' \in V$ are adjacent if $v,v' \in e$ for some $e \in E$.
Interestingly, this sufficient condition is equivalent to the bounded rank assumption for $\alpha$-acyclic hypergraphs (see section~4 of~\cite{dPKha21MOR} for the proof of equivalence). 
In~\cite{CapdPDiG24mIPCO}, the authors show that if the incidence graph of the underlying hypergraph has a bounded treewidth, then the pseudo-Boolean polytope has a polynomial-size extended formulation. 
Recall that given a hypergraph $G=(V,E)$, the~\emph{incidence graph of $G$}
is a bipartite graph whose vertex set is $V \cup E$ and the edge set is $\{\{v, e\} : v \in V, \ e \in E, \ v \in e\}$.
For any hypergraph, the treewidth of the incidence graph is upper bounded by the treewidth of the intersection graph. While the problem of computing the treewidth of a graph is NP-hard in general, it is fixed-parameter tractable when parameterized by the treewidth~\cite{BodKos08}. 

\medskip

It is important to note that in all of the above results regarding acyclic hypergraphs and bounded treewidth, the proposed extended formulations can be constructed in polynomial-time, thereby translating into polynomial-time algorithms to solve the corresponding~\cref{prob PBO} or~\cref{prob BPO}.  
For further results on polyhedral relaxations of multilinear sets of degree at least three, 
see~\cite{CraRod17,dPKhaSah20MPC, dPKha21MOR,dPDiG21IJO,kha22,CheDasGun23,dPWal23MPB,KimRicTaw24}.

\parag{The extension complexity} In~\cite{fiorini12}, the authors prove that the Boolean quadric polytope of a complete graph has exponential extension complexity. In~\cite{AboFio19}, the authors prove that for any proper minor-closed family of graphs, there exists
a positive constant $\delta$ such that the Boolean quadric polytope of any graph $G$ in the family has extension complexity $2^{\delta\tw(G)}$, where $\tw(G)$ denotes the treewidth of $G$.
From the proof of their main result it also follows that there exists a positive constant $\delta > \frac{1}{20}$, such that the Boolean quadric polytope of any graph $G$ has an extension complexity lowered bounded by  
$2^{\Omega((\tw(G))^\delta)}$. These results imply that, for graphs with unbounded treewidth, the Boolean quadric polytope does not admit a polynomial-size extended formulation. To this date, no result regarding the extension complexity of the multilinear polytope or the pseudo-Boolean polytope is available.


\subsection{Our contributions}

In this paper, we obtain new necessary conditions and sufficient conditions for the existence of polynomial-size extended formulations for the pseudo-Boolean polytope, as well as new necessary conditions for polynomial-time solvability of pseudo-Boolean optimization problems. The common technique to establish all our results is the innovative use of the
\emph{inflation operation} recently introduced in~\cite{dPKha23}.
Henceforth, for brevity, we define the \emph{treewidth} of a hypergraph as the treewidth of the intersection graph of the hypergraph; the \emph{treewidth} of a signed hypergraph is then defined as the treewidth of underlying hypergraph of the signed hypergraph.

\medskip

The main contributions of this paper are twofold:

    \parag{Negative results}
    In Section~\ref{sec:limits}, we obtain the first intractability results, in the best-case sense, for pseudo-Boolean optimization. 
    First, under some mild assumptions, we show that for every sequence of hypergraphs $\bra{G_k}_{k=1}^\infty$ indexed by treewidth $k$ and with bounded rank, the complexity of solving~\cref{prob PBO} on a signed hypergraph whose underlying hypergraph is $G_k$ grows super-polynomially in $k$ (see~\cref{{th PBO hard}}). 
    To prove this result, we first obtain an intractability result for BQO problems which is based on a complexity result for inference in graphical models~\cite{ChaSreHar08} (see~\cref{th treewidth}). Subsequently, using the inflation operation introduced in~\cite{dPKha23}, we present a polynomial-time reduction of BQO instances to pseudo-Boolean optimization instances, and as a result obtain an intractability result for pseudo-Boolean optimization.   
    Second, leveraging existing results on the extension complexity of the Boolean quadric polytope~\cite{AboFio19}, we obtain the first lower bound on the extension complexity of the pseudo-Boolean polytope. Namely, we prove that for 
    any hypergraph $G$ of bounded rank, there exists a signed hypergraph $H$ with the underlying hypergraph $G$, such that the extension complexity of the pseudo-Boolean polytope $\PBP(H)$ grows exponentially in the treewidth of $G$ (see~\cref{th xcPBP}).
    It is important to note that the bounded rank assumption is key for these results, as, for example, a polynomial-size extended formulation for the multilinear polytope of $\beta$-acyclic can be constructed in polynomial-time~\cite{dPKha23mMPA}, and a $\beta$-acyclic hypergraph with unbounded rank has an unbounded treewidth.

    \parag{Positive results}
    In Section~\ref{sec:sufficient}, we obtain new polynomial-size extended formulations for the pseudo-Boolean polytope of signed hypergraphs whose underlying hypergraphs contain $\beta$-cycles. To this end, we make use of the hypergraph-theoretic notion of nest-sets recently introduced in~\cite{lanz23}, which is a natural generalization of nest-points. We then introduce the notion of nest-set gap for a hypergraph, a quantity equal to zero if and only if the hypergraph is $\beta$-acylic.  The nest-set gap constitutes a notion of ``distance'' from the hypergaph acyclicity. We prove that if this distance is bounded, then the pseudo-Boolean polytope admits a polynomial-size extended formulation (see~\cref{{conj1}}). 
    The complexity of checking whether the nest-set gap of a hypergraph is bounded is unknown. However, checking the boundedness of a related quantity, namely, the nest-set width of a hypergraph, can be solved in polynomial time. 
    The nest-set width of a hypergraph is equal to one if and only if the hypergraph is $\beta$-acyclic. Moreover, the nest-set width of a hypergraph is lower bounded by its nest-set gap; this, in turn, implies that the pseudo-Boolean polytope of a signed hypergraph with bounded nest-set width has a polynomial-size extended formulation as well. Our proposed extended formulations for signed hypergraphs with bounded nest-set width can be constructed in polynomial-time, hence providing polynomial-time algorithms for a large class of pseudo-Boolean optimization problems whose underlying hypergraphs contain $\beta$-cycles.
 
    \medskip

    Figure~\ref{fig restrictions} summarizes various types of hypergraphs (resp. underlying hypergraphs) for which the existence of a polynomial-size extended formulation for the multilinear polytope (resp. the pseudo-Boolean polytope) is known.

\begin{figure}
  \centering

\begin{tikzpicture}[node distance=0.7cm]

\node[dashed] (alpha) [startstop] {$\alpha$-acyclic};
\node (beta) [startstop, below=3 of alpha] {$\beta$-acyclic};
\node (gamma) [startstop, below=of beta] {$\gamma$-acyclic};
\node (Berge) [startstop, below=of gamma] {Berge-acyclic};

\node[ultra thick] (nsw) [startstop, above left=of beta] {bounded nest-set width};
\node[ultra thick] (nsg) [startstop, above=of nsw] {bounded nest-set gap};

\node (btwalpha) [startstop, above right=of beta] {bounded treewidth $\equiv$ \\ $\alpha$-acyclic with bounded rank};
\node (btw) [startstop, above=1 of btwalpha] {bounded incidence treewidth};

\draw[->, very thick]  (nsw) -- (nsg);
\draw[->, very thick]  (beta) -- (nsw);
\draw[->, very thick]  (Berge) -- (gamma);
\draw[->, very thick]  (gamma) -- (beta);
\draw[->, very thick]  (beta) -- (alpha);
\draw[->, very thick]  (btwalpha) -- (btw);
\draw[->, very thick]  (btwalpha) -- (alpha);

\end{tikzpicture}

\caption{Hypergraph classes (resp. underlying hypergraph classes) for which the existence of a polynomial-size extended formulation for the multilinear polytope (resp. the pseudo-Boolean polytope) is known. The dashed lines depict the class for which the multilinear polytope (hence the pseudo-Boolean polytope) has exponential extension complexity. The solid lines depict the classes for which there exists a polynomial-size extended formulation for the pseudo-Boolean polytope (hence the multilinear polytope). Our contributions in this paper are depicted in thick solid lines. Arcs are directed from less general to more general. Properties with no directed connection are incomparable.
}
\label{fig restrictions}
\end{figure}


\section{Intractability results for pseudo-Boolean optimization}
\label{sec:limits}

In this section, we provide the first intractability results, in the best-case sense, for pseudo-Boolean optimization. 
In the combinatorial optimization literature, it is well-understood that treewidth is a crucial metric in measuring the difficulty of many graph problems.
In the following, 
we denote by $\tw(G)$ the treewidth of $G$, where $G$ can be a graph, a hypergraph, or a signed hypergraph.
We also denote by $\poly(n)$, a polynomial function in $n$.
In our context, if the treewidth of a hypergraph is bounded, then the corresponding multilinear polytope admits a polynomial-size extended formulation~\cite{WaiJor04,Lau09,BieMun18}:

\begin{theorem}\label{th: alphaPoly}
Let $G=(V,E)$ be a hypergraph with $\tw(G) = k$. Then $\MP(G)$ has an extended formulation with $O(2^k |V|)$ variables and inequalities. Moreover, if $k \in O(\log \poly(|V|, |E|))$, then $\MP(G)$ has a polynomial-size extended formulation. 
\end{theorem}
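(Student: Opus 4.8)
The plan is to build the extended formulation via a tree decomposition of the intersection graph of $G$ and the standard ``junction tree'' / dynamic programming mechanism for marginal polytopes, then verify that the resulting formulation has the claimed size. First I would fix a tree decomposition $(T, \{X_t\}_{t \in T})$ of the intersection graph of $G$ of width $k$; by standard results one may assume $T$ has $O(|V|)$ nodes, each bag $X_t$ has at most $k+1$ vertices, and—crucially—every edge $e \in E$ of the hypergraph is contained in some bag $X_t$. This last point is exactly where the intersection-graph hypothesis is used: if $v, v' \in e$ then $v, v'$ are adjacent in the intersection graph, so $e$ is a clique there, and every clique of a graph is contained in some bag of any tree decomposition. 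Thus for each $e \in E$ we may pick a bag $t(e)$ with $e \subseteq X_{t(e)}$.

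Next I would introduce the auxiliary variables. For each bag $t$ and each of the $2^{|X_t|} \le 2^{k+1}$ binary assignments $\alpha \in \{0,1\}^{X_t}$, introduce a variable $\lambda_{t,\alpha} \ge 0$ representing the ``probability'' that the restriction of $z$ to $X_t$ equals $\alpha$. Impose $\sum_\alpha \lambda_{t,\alpha} = 1$ for each $t$, the consistency (marginalization) constraints $\sum_{\alpha : \alpha|_{X_t \cap X_{t'}} = \beta} \lambda_{t,\alpha} = \sum_{\alpha' : \alpha'|_{X_t \cap X_{t'}} = \beta} \lambda_{t',\alpha'}$ for each tree edge $\{t,t'\}$ and each $\beta \in \{0,1\}^{X_t \cap X_{t'}}$, and finally the linking equations to the original-space variables: $z_v = \sum_{\alpha : \alpha_v = 1} \lambda_{t,\alpha}$ for any bag $t$ containing $v$, and $z_e = \sum_{\alpha : \alpha_u = 1 \ \forall u \in e} \lambda_{t(e),\alpha}$. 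The total count is $O(2^{k+1}|V|)$ variables and a comparable number of equations and nonnegativity inequalities, which is $O(2^k|V|)$ as claimed; when $k \in O(\log \poly(|V|,|E|))$ this is polynomial in the input size.

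The substance of the proof is showing this extended formulation projects exactly onto $\MP(G)$. The inclusion of $\MP(G)$ in the projection is easy: for an integral point $z \in \S(G)$, set $\lambda_{t,\alpha} = 1$ if $\alpha = z|_{X_t}$ and $0$ otherwise; all constraints hold, and the linking equations reproduce $z_v$ and (using $e \subseteq X_{t(e)}$ and $z_e = \prod_{v \in e} z_v$) also $z_e$; then take convex combinations. For the reverse inclusion, I would run the classical junction-tree argument: given a feasible $(\lambda, z)$, root $T$ arbitrarily and show by induction from the leaves inward that the locally consistent collection $\{\lambda_{t,\cdot}\}$ glues into a single distribution $\mu$ on $\{0,1\}^V$ whose bag-marginals are the $\lambda_{t,\cdot}$—this is where the running-intersection property of the tree decomposition is essential, guaranteeing that conditional independence across the separator lets the local pieces be multiplied together consistently. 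Then $z_v = \mu(\{z_v = 1\})$ and $z_e = \mu(\{z_u = 1\ \forall u \in e\}) = \mu(\{z_e = \prod_{u\in e} z_u = 1\})$, so $(z)$ is a convex combination (with weights $\mu$) of points of $\S(G)$, i.e.\ $z \in \MP(G)$.

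The main obstacle is the gluing/consistency step: turning a family of pairwise-consistent bag distributions into one global distribution on $\{0,1\}^V$. This is not merely bookkeeping—it requires the running-intersection property and an inductive argument contracting the tree one leaf bag at a time, carefully handling the case where a separator $X_t \cap X_{t'}$ has a marginal with zeros (so that conditioning is only defined on its support). Everything else—the size bound, the easy inclusion, the linking equations—is routine once the tree decomposition with the ``every edge in a bag'' property is in hand. I would also remark that this construction is exactly the one appearing in \cite{WaiJor04,Lau09,BieMun18}, so the write-up can cite those for the marginal-polytope machinery and only needs to spell out the reduction from $\MP(G)$ to the marginal polytope of the intersection graph.
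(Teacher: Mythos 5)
Your proposal is correct and is exactly the junction-tree/marginal-polytope construction from \cite{WaiJor04,Lau09,BieMun18} that the paper itself invokes for this theorem (the paper gives no independent proof, only the citation together with the observation that every hyperedge is a clique of the intersection graph and hence lies in a bag). The size accounting and the gluing argument via the running-intersection property are the standard ingredients, so nothing further is needed.
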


\cref{th: alphaPoly} implies that if the hypergraph has a bounded treewidth, then~\cref{prob BPO} can be solved in polynomial-time. 
A similar result can be stated for the pseudo-Boolean polytope $\PBP(H)$ in terms of $\tw(H)$ (see theorem~6 in~\cite{dPKha23}).  Conversely, it is well-known that even for BQO problems, unbounded treewidth can imply intractability. In fact, the next theorem follows directly from the NP-hardness of the Simple Max Cut problem. 
\begin{theorem}[\cite{GarJohSto76}]
\label{th maxcut}
Let $K_n$ denote the complete graph with $n$ nodes.
Let $f$ be an algorithm that solves any instance of BQO on $K_n$ with coefficients in $\{0,\pm 1, \pm 2\}$ in time $T(n)$.
Then, assuming $\P \neq \NP$, $T(n)$ grows super-polynomially in $n$.
\end{theorem}

Observe that in a complete graph, the treewidth coincides with the number of nodes. 
In this section, we prove that for a pseudo-Boolean optimization problem on a signed hypergraph $H$, unbounded treewidth $\tw(H)$ \emph{always} implies (i) intractability of~\cref{prob PBO}, and (ii) exponential extension complexity of $\PBP(H)$.

\subsection{A necessary condition for polynomial-time solvability of pseudo-Boolean optimization}

In the following, leveraging existing works~\cite{ChaSreHar08,FaeMunPok22}, we first obtain an intractability result for BQO problems. Building upon this result, we then obtain intractability results for higher degree binary polynomial optimization problems. 
Given an instance $\Lambda$ of \cref{prob BPO} or \cref{prob PBO}, we refer to the \emph{size} of $\Lambda$ (also known as \emph{bit size}, or \emph{length}), denoted by $\|\Lambda\|$, as the number of bits required to encode it, which is the standard definition in the mathematical programming literature (see, for example \cite{SchBookIP,ConCorZamBook}).
Recall that the Bounded-error Probabilistic Polynomial time ($\BPP$) is the class of decision problems solvable by a probabilistic Turing machine in polynomial time with an error probability bounded by $1/3$ for all instances. 
Our intractability results in this section are under the assumption that $\NP \not\subseteq \BPP$.
It is widely believed that $\P=\BPP$, which implies that $\NP \not\subseteq \BPP$ is equivalent to $\P \neq \NP$.
For a precise definition of $\BPP$ and the commonly believed $\NP \not\subseteq \BPP$ hypothesis, we refer the reader to \cite{AroBar09}.

\subsubsection{Intractability of binary quadratic optimization}


Our intractability result, which is closely related to theorem~5.1 
in~ \cite{ChaSreHar08} and theorem 7 in~\cite{FaeMunPok22}, implies that there exists no class of graphs with unbounded treewidth for which every BQO problem on these graphs can be solved in time polynomial in the treewidth. 
More precisely, for every sequence of graphs $\bra{G_k}_{k=1}^\infty$ indexed by the treewidth $k$, there exists a choice of objective function coefficients $c$ such that the runtime of any algorithm that solves the BQO problem is super-polynomial in the treewidth $k$. In theorem~5.1 
of~\cite{ChaSreHar08}, the authors prove a similar result for the inference problem on binary pairwise graphical models. It is known that this inference problem can be formulated as a BQO problem. However, we state an independent proof since first our proof is direct; \ie it is concerned with a BQO problem and not inference, and second theorem~5.1 
in~\cite{ChaSreHar08} is proved under the stronger assumption  $\NP \not\subseteq \P/ \poly$, where $\P/ \poly$
denotes the class of decision problems that can be solved by a polynomial-time Turing machine with advice strings of length polynomial in the input size. Unlike other polynomial-time classes such as $\P$ or $\BPP$, the class $\P/ \poly$, is considered impractical for computing.
In theorem 7 of~\cite{FaeMunPok22}, the authors prove an intractability result similar to ours for quadratically constrained quadratic optimization problems, and  their reduction arguments rely on the fact that the optimization problem has linear and quadratic constraints.  Despite these technical differences, our proof technique follows the general scheme first proposed in~\cite{ChaSreHar08} and later refined in~\cite{FaeMunPok22}.

We make use of the next theorem, which is essentially a consequence of the celebrated graph minor theorem~\cite{ChekChu16}.
The proof of this theorem is stated inside the proof of theorem~7 in~\cite{FaeMunPok22}. We include it here for completeness. 

\begin{theorem}\label{gminor 2}
Any planar graph $\bar G$ with $n$ nodes is a minor of any graph $G$ with treewidth at least $\kappa(n) \in O(n^{98}\poly\log(n))$.
Furthermore, there is a randomized algorithm that, given $G$, outputs the sequence of minor operations transforming $G$ into $\bar G$ in time $O(\poly(|V(G)|\cdot\kappa(n)))$, with high probability.
\end{theorem}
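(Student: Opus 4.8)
The plan is to derive this theorem from the polynomial grid-minor theorem of Chekuri and Chuzhoy~\cite{ChekChu16}, combined with the classical fact that every planar graph is a minor of a grid whose side length is linear in the number of its vertices. \emph{Step 1 (grid-minor theorem).} Recall the polynomial, algorithmic form of the grid-minor theorem: there is a universal constant $c$ and a function $f(g)=c\,g^{98}\poly\log(g)$ such that every graph $G$ with $\tw(G)\ge f(g)$ contains the $g\times g$ grid as a minor; moreover, there is a randomized algorithm that, given such a $G$, returns (with high probability, in time $\poly(|V(G)|)$) an explicit model of this grid minor, that is, the corresponding sequence of vertex deletions, edge deletions, and edge contractions.

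\emph{Step 2 (planar graphs live in small grids).} Next I would invoke the classical result that every planar graph $\bar G$ on $n$ vertices is a minor of the $h\times h$ grid for some $h=h(n)=O(n)$ (this follows from straight-line grid drawings of planar graphs together with the ``quickly excluding a planar graph'' circle of ideas), and that from a planar embedding of $\bar G$ one can compute in time $\poly(n)$ an explicit sequence of minor operations realizing $\bar G$ inside the $h\times h$ grid.

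\emph{Step 3 (assembling both claims).} Set $g(n):=h(n)=O(n)$ and $\kappa(n):=f(g(n))=c\,(O(n))^{98}\poly\log(O(n))=O(n^{98}\poly\log(n))$. If $\tw(G)\ge\kappa(n)$, then Step~1 yields that the $g(n)\times g(n)$ grid is a minor of $G$, and Step~2 yields that $\bar G$ is a minor of that grid; transitivity of the minor relation then gives that $\bar G$ is a minor of $G$, proving the first assertion. For the algorithmic claim, observe that $\tw(G)\ge\kappa(n)$ forces $|V(G)|\ge\kappa(n)$, so running the algorithm of Step~1 on $G$ takes time $\poly(|V(G)|)\le\poly(|V(G)|\cdot\kappa(n))$ and produces, with high probability, the minor operations producing the $g(n)\times g(n)$ grid; concatenating them with the $\poly(n)$-time computable operations of Step~2 gives a sequence transforming $G$ into $\bar G$ within the stated time bound.

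\emph{Main obstacle.} The substantive points here are bookkeeping rather than new ideas: one must (a) use a version of the planar-graph-in-grid fact with side length $O(n)$ --- not merely $O(\poly(n))$ --- so that the exponent in $\kappa(n)$ comes out exactly $98$ (up to polylogarithmic factors), and (b) appeal to the \emph{algorithmic} form of the Chekuri--Chuzhoy theorem, which under the treewidth promise actually returns a grid-minor model in polynomial time rather than merely asserting its existence. Everything else is transitivity of the minor relation and composition of two polynomial-time subroutines.
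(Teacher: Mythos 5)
Your proposal is correct and follows essentially the same route as the paper: the paper likewise combines the fact that an $n$-node planar graph is a minor of a grid of side length linear in $n$ (citing Robertson--Seymour--Thomas, with the minor operations computable in linear time via Tamassia--Tollis) with the algorithmic polynomial grid-minor theorem of Chekuri--Chuzhoy, and concludes by transitivity of the minor relation. Your two ``bookkeeping'' caveats (linear grid side length and the algorithmic form of the grid-minor theorem) are exactly the points the paper's citations are doing the work for.
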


\begin{proof}
Let $\bar G$ be a planar graph with $n$ nodes, and let $G$ be a graph with treewidth at least $\kappa(n) \in O(n^{98}\poly\log(n))$.
Since $\bar G$ is planar, it is a minor of the $n/c \times n/c$ grid, for some constant $c$ \cite{RobSeyThom94}, and the sequence of minor operations can be found in linear time \cite{TamTol89}.
It follows from the graph minor theorem in~\cite{ChekChu16} that the $n/c \times n/c$ grid is a minor of $G$, and that we can find the sequence of minor operations (with high probability) in $O(\poly(|V(G)|\cdot\kappa(n)))$ time.
\end{proof}




Henceforth, we say that a countable family of graphs $\bra{G_k}_{k=1}^\infty$ is \emph{polynomial-time enumerable} if a description of $G_k$ is computable in $\poly(k)$ time. 
This in turn implies that an encoding of $G_k$ of size polynomial in $k$ exists. The next theorem provides a necessary condition for polynomial-time solvability of BQO problems.

\begin{theorem}
\label{th treewidth}
Let $\bra{G_k}_{k=1}^\infty$ be a polynomial-time enumerable family of graphs with $\tw(G_k)=k$, for all $k$.
Let $f$ be an algorithm that solves any instance $\Lambda_k$ of BQO on graph $G_k$ in time $T(k) \cdot \poly(\|\Lambda_k\|)$.
Then, assuming $\NP \not\subseteq \BPP$, $T(k)$ grows super-polynomially in $k$. 
\end{theorem}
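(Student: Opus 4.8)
The plan is to reduce a known NP-hard problem---specifically MAX-CUT, or equivalently an unconstrained BQO on a dense planar-minor-rich graph---to a sequence of BQO instances on the graphs $G_k$, using \cref{gminor 2} as the engine that transports a hard instance onto $G_k$. The argument is a standard "padding / hardness transfer up the treewidth scale" argument in the spirit of \cite{ChaSreHar08} and \cite{FaeMunPok22}, so I would organize it as a contrapositive: assume $T(k)$ is polynomially bounded, i.e.\ $T(k) \le \poly(k)$, and derive a randomized polynomial-time algorithm for an NP-hard problem, contradicting $\NP \not\subseteq \BPP$.

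First I would fix the NP-hard source problem. Take MAX-CUT on a planar graph---wait, MAX-CUT on planar graphs is polynomial, so instead take MAX-CUT (or, say, independent set, or a generic BQO) on an arbitrary graph $\bar G$ with $n$ nodes; this is NP-hard. Such a $\bar G$ with $n$ nodes embeds into an $O(n)\times O(n)$ grid only if it is planar, so to keep \cref{gminor 2} applicable I would instead route through the grid directly: a generic BQO instance on $m$ variables can be modeled on the $m\times m$ grid (or on an $O(m^2)$-node planar graph) by the usual crossing-gadget / subdivision trick, at a polynomial blow-up in size and with the grid itself being planar. So: start from an NP-hard BQO instance of size $N$, realize it as a BQO instance $\bar\Lambda$ on a planar graph $\bar G$ with $n = \poly(N)$ nodes. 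Second, given $n$, pick the smallest index $k$ with $\tw(G_k) = k \ge \kappa(n) \in O(n^{98}\poly\log n)$; since the $G_k$ are polynomial-time enumerable and have strictly increasing treewidth, such a $k$ is $\poly(n)$ and $G_k$ can be written down in time $\poly(k) = \poly(n)$. Third, invoke \cref{gminor 2}: with high probability we obtain, in time $\poly(|V(G_k)| \cdot \kappa(n)) = \poly(n)$, a sequence of minor operations turning $G_k$ into $\bar G$. Fourth---the technical heart---I would show how a BQO instance on $\bar G$ lifts to a BQO instance $\Lambda_k$ on $G_k$ whose optimal value encodes the answer to $\bar\Lambda$: each edge contraction $uv \to w$ is simulated by adding a large "equality-enforcing" quadratic penalty $M(z_u - z_v)^2 = M z_u + M z_v - 2M z_u z_v$ forcing $z_u = z_v$ in any optimal solution (so the contracted node's value is shared), while edge and vertex deletions are handled by setting the corresponding coefficients to zero. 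Choosing $M$ polynomially bounded in the coefficients of $\bar\Lambda$ and the number of operations keeps $\|\Lambda_k\| = \poly(n)$ and keeps everything within the standard bit-size model. Fifth, run the hypothesized algorithm $f$ on $\Lambda_k$: by assumption this takes $T(k)\cdot \poly(\|\Lambda_k\|) \le \poly(k)\cdot\poly(n) = \poly(n)$ time, and from its optimal value we read off the answer to the original NP-hard instance. The only randomness is in \cref{gminor 2}, with one-sided high-probability success, which can be amplified to error $<1/3$; hence the NP-hard problem is in $\BPP$, contradicting $\NP \not\subseteq \BPP$. This forces $T(k)$ to be super-polynomial.

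The main obstacle I anticipate is the fourth step: verifying that the minor-operation simulation is faithful at the level of \emph{optimal values}, not just feasibility. Concretely, one must argue that the penalty terms for contractions do not interact badly with one another (a chain of contractions $u_1 \sim u_2 \sim \cdots$ should still force all of $z_{u_1} = \cdots$ at optimality, which requires the penalty weight $M$ to dominate not a single term but the whole accumulated objective), and that vertex deletions---which in the minor sense remove a node entirely---are correctly emulated by zeroing coefficients rather than by, say, fixing a variable (fixing would be fine too, but one has to be consistent). A careful induction on the number of minor operations, tracking how the objective value transforms under each single operation and choosing $M$ larger than the sum of absolute values of all coefficients ever introduced, should close this; one also has to double-check that the total size $\|\Lambda_k\|$ stays polynomial, i.e.\ that $M$ needs only polynomially many bits, which holds because the number of minor operations is $\poly(|V(G_k)|) = \poly(n)$ and each coefficient of $\bar\Lambda$ has $\poly(N)$ bits. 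A secondary, more bookkeeping-level point is handling the gap between "$\tw(G_k) = k$" exactly versus "$\tw(G_k)\ge\kappa(n)$": since the family has treewidth exactly $k$ for each $k$, we simply take $k = \lceil\kappa(n)\rceil$ and use monotonicity of \cref{gminor 2} in the treewidth lower bound.
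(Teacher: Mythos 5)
Your overall architecture coincides with the paper's: argue by contraposition, use \cref{gminor 2} to embed a hard planar instance as a minor of $G_{\kappa(n)}$, simulate node/edge deletions by zeroing coefficients and edge contractions by the penalty $M(z_u-z_v)^2$ with $M$ dominating the accumulated objective, check that the coefficients stay of polynomial bit size (they double with each contraction, so the magnitude can reach $2^{|E_{\kappa(n)}|}M$ but the encoding length remains polynomial), and conclude that a polynomial $T(k)$ would place an NP-hard problem in $\BPP$. All of that matches the paper's proof, including your anticipated ``obstacle'' about choosing $M$ larger than the sum of absolute values of all coefficients.

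The one genuine gap is your very first step: the choice and justification of the NP-hard planar source problem. You correctly notice that MAX-CUT on planar graphs is polynomial, but your repair --- planarizing an arbitrary BQO instance via ``the usual crossing-gadget / subdivision trick'' --- cannot be taken for granted and is in fact the entire difficulty. A local crossing gadget that turns an arbitrary BQO into an equivalent BQO on a planar graph would in particular have to exist for pure quadratic objectives, i.e.\ for MAX-CUT, and no such gadget can exist unless $\P=\NP$, precisely because planar MAX-CUT is in $\P$. What you actually need is a problem already known to be NP-hard \emph{on planar graphs} that is expressible as a BQO on that same planar graph; the paper uses planar Max-2SAT, which is NP-complete and translates into a BQO on the clause--variable planar graph with coefficients in $\{0,\pm 1,\pm 2\}$ (a fact that also makes the subsequent coefficient bookkeeping clean, since the initial sum of absolute values is at most $|\bar V|+2|\bar E|$). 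With that substitution for your first step, the rest of your argument goes through exactly as in the paper.
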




\begin{proof}
The problem \emph{Max 2-SAT} on a planar graph, also known as \emph{planar Max-2SAT,} is NP-complete~\cite{guibas93}. 
It can be checked that an instance of Max-2SAT on a planar graph $\bar G = (\bar V,\bar E)$ can be formulated as a BQO problem on the same graph $\bar G$, and with objective coefficients $\bar c_v \in \{\pm 1, 0\}$ for all $v \in \bar V$ and $\bar c_e \in \{\pm 2, \pm 1, 0\}$ for all $e \in \bar E$. 
Let $n := |\bar V|$.
By \cref{gminor 2}, $\bar G$ is a minor of $G_{\kappa(n)} = (V_{\kappa(n)}, E_{\kappa(n)})$, where $\kappa(n)$ is defined in the statement of the theorem.
Since by the enumerability assumption $V_{\kappa(n)}$ is bounded by a polynomial in $\kappa(n)$, by \cref{gminor 2}, there is a polynomial-time randomized algorithm that, given $G_{\kappa(n)}$, outputs the sequence of minor operations transforming $G_{\kappa(n)}$ into $\bar G$, with high probability.

We now show that, given an instance of a BQO problem on a planar graph $\bar G = (\bar V, \bar E)$ with $\bar c_v \in \{\pm 1, 0\}$ for all $v \in \bar V$ and $\bar c_e \in \{\pm 2, \pm 1, 0\}$ for all $e \in \bar E$, we can construct in polynomial time an equivalent instance of a BQO problem on $G_{\kappa(n)}$.
First, we observe that $G_{\kappa(n)}$ can be constructed in time bounded by a polynomial in $\kappa(n)$, due to our enumerability assumption.
It suffices to show the equivalence for a single minor operation; The result then follows from a repeated application of this technique. 
Let $H=(V_H, E_H)$ be a graph and denote by $H'=(V_{H'}, E_{H'})$ a graph obtained from $H$ after a single minor operation. Consider an instance $\Lambda'$ of a BQO problem on $H'$ with objective function coefficients $c'$. 
We show how to solve this instance $\Lambda'$ by solving an instance $\Lambda$ of a BQO problem on $H$ with objective function coefficients $c$.
Recall that there are three graph minor operations (see, for example~\cite{ChaSreHar08}):

\smallskip

    \emph{1. Node deletion}: Let $u \in V_H$ and denote by $E_u$ all edges of $H$ containing $u$.  
    Then the minor of $H$ obtained by deleting node $u$ is $H'= (V_H\setminus \{u\}, E_H \setminus E_u)$.
    In this case, the objective function coefficients $c$ of $\Lambda$ can be defined as follows: $c_u = 0$, $c_e =0$
    for all $e \in E_u$ and $c_p = c'_p$ for all $p \in V_H\setminus \{u\}$
    and $p \in E_H \setminus E_u$.

    \smallskip

    \emph{2. Edge deletion}: Let $f \in E_H$.
    Then the minor of $H$ obtained by deleting edge $f$ is $H'= (V_H, E_H \setminus \{f\})$. 
    In this case, the objective function coefficients $c$ of $\Lambda$ can be defined as follows: $c_f = 0$ and $c_p = c'_p$ for all $p \in V_H$
    and $p \in E_H \setminus \{f\}$.


\smallskip

    \emph{3. Edge contraction}: Let $\{u,v\} \in E_H$ and denote by $E_u$ all edges of $H$ containing $u$. Let $E'_u$ be set obtained by replacing $u$ with $v$ in every element of $E_u$ and subsequently dropping $\{v,v\}$ from it.
    Then the minor of $H$ obtained by contracting edge $\{u,v\}$ to node $v$ is $H'= (V_H \setminus\{u\}, E_H \cup E'_u \setminus E_u)$. 
    Define $M := \sum_{p \in V_{H'}\cup E_{H'}}{|c'_p|}$.
    In this case, the objective function coefficients $c$ of $\Lambda$ can be defined as follows: $c_u = -M$, $c_v = c'_v - M$, $c_{\{u,v\}} =2M$, $c_p = c'_p$ for all $p \in V_H \setminus \{u,v\}$
    and $p \in E_H \setminus E_u$, and $c_e = c'_{e \cup \{v\}\setminus \{u\}}$ for all $e \in E_u \setminus \{u,v\}$. To see that $\Lambda'$ is equivalent to $\Lambda$, first note that $\Lambda'$ is given by
    \begin{align*}
        & \max \sum_{w \in V_{H'}}{c'_w x_w}+\sum_{\{w,q\} \in E_{H'}}{c'_{w,q} x_w x_q}\\
        & {\rm s.t.} \quad x_w \in \{0,1\}, w \in V_{H'}.
    \end{align*}
    Then $\Lambda'$ can be equivalently solved by solving the following optimization problem on $H$:
    \begin{align}\label{lqpc}
        & \max \sum_{w \in V_{H}}{c''_w x_w}+\sum_{\{w,q\} \in E_{H}}{c''_{w,q} x_w x_q}\\
        & {\rm s.t.} \quad x_u = x_v\nonumber\\
        & \quad \quad \; x_w \in \{0,1\}, w \in V_H \nonumber,
    \end{align}
    where $c''_u = 0$, $c''_{\{u,v\}} = 0$, $c''_p = c'_p$ for all $p \in V_H \setminus \{u\}$
    and $p \in E_H \setminus E_u$, and $c''_e = c'_{e \cup \{v\}\setminus \{u\}}$ for all $e \in E_u \setminus \{u,v\}$. In order to reformulate Problem~\eqref{lqpc} as a BQO problem on $H$, it suffices to remove the constraint $x_u = x_v$ and instead subtract the penalty term $M(x_u - x_v)^2$ from the objective function. The equivalence then follows. 

\medskip

We have explained how to reduce (with high probability) a BQO problem on $\bar G = (\bar V, \bar E)$ to a BQO problem on $G_{\kappa(n)} = (V_{\kappa(n)}, E_{\kappa(n)})$.
The number of arithmetic operations performed in the reduction is clearly polynomially bounded; therefore it suffices to show that the size of the objective function coefficients of the constructed instance is polynomially bounded by the size of the original BQO instance.
Recall that the objective coefficients of the original instance are $\bar c_v \in \{\pm 1, 0\}$ for all $v \in \bar V$, and $\bar c_e \in \{\pm 2, \pm 1, 0\}$ for all $e \in \bar E$.
Let us now consider the objective function coefficients of the constructed instance of the BQO problem, denoted by $c$, and observe that $c$ has integer components.
Let $M$ be the sum of the absolute values of all coefficients in the original instance, \ie $M := \sum_{v \in \bar V}{|\bar c_v|}+ \sum_{e \in \bar E}{|\bar c_e|}$ and observe that $M \le |\bar V| + 2 |\bar E|$.
Our enumerability assumption, together with $|\bar V| \le |V_{\kappa(n)}|$, $|\bar E| \le |E_{\kappa(n)}|$, imply that $M$ is polynomial in $\kappa(n)$ as well.
Now consider the three types of minor operations defined above.
First, note that the first two minor operations do not change the sum of the absolute values of all coefficients.
Let us consider the last minor operation; \ie the edge contraction. 
By construction, after each edge contraction, the sum of the absolute values of all coefficients doubles.
Since there are at most $|E_{\kappa(n)}|$ contractions,
we conclude that in the constructed instance, the sum of the absolute values of all coefficients is at most $2^{|E_{\kappa(n)}|} M$, whose size, by our enumerability assumption, is polynomial in $\kappa(n)$.


Now let us use Algorithm $f$ to solve the resulting BQO problem over $G_{\kappa(n)}$ in time $T(\kappa(n)) \cdot \poly(\|\Lambda_{\kappa(n)}\|)$ and hence equivalently solve the initial BQO problem over the graph $\bar G$ with $n$ nodes. 
We already showed that $\|\Lambda_{\kappa(n)}\|$ is upper bounded by a polynomial in $\kappa(n)$, which in turn is bounded by a polynomial in $n$. 
Assume, for a contradiction, that $T(k)$ is bounded by a polynomial in $k$.
This implies that $T(\kappa(n))$ is bounded by a polynomial in $\kappa(n)$, and therefore by a polynomial in $n$.
It then follows that planar MAX-2SAT$\in \BPP$, which contradicts the assumption that $\NP \not\subseteq \BPP$.
Therefore, $T(k)$ grows super-polynomially in $k$.
\end{proof}

Together with Theorem~\ref{th: alphaPoly}, \cref{th treewidth} suggests that a bounded treewidth is necessary and sufficient for polynomial-time solvability of BQO problems. However, notice that there is a gap between these two results. 
For every positive integer $k$, let $G_k = (V_k,E_k)$ be a graph with treewidth $k$ and with $n_k:=|V_k| \in \Theta (2^{\sqrt k})$, hence $k \in \Theta (\log^2 n_k)$. 
Clearly, $\bra{G_k}_{k=1}^\infty$ does not satisfy the assumption of Theorem~\ref{th: alphaPoly} because $k \not\in O(\log \poly(n_k))$. 
On the other hand, $\bra{G_k}_{k=1}^\infty$ is not a polynomial-time enumerable family because $n_k$ is not bounded by a polynomial in $k$ and hence~\cref{th treewidth} is not applicable. 
To the best of our knowledge, at the time of this writing, no tractability or intractability result is known for the regime $k \in \omega(\log n_k)$ and $k \in o(n_k^{1/c})$ for any constant integer $c$.

\subsubsection{Intractability of \cref{prob BPO} and~\cref{,prob PBO}}

Our next goal is to obtain necessary conditions for polynomial-time solvability of higher-degree binary polynomial optimization problems. 
Henceforth, we say that a countable family of hypergraphs $\bra{G_k}_{k=1}^\infty$ is \emph{polynomial-time enumerable} if a description of $G_k$ is computable in $\poly(k)$ time. 
%
A fairly straightforward application of~\cref{th treewidth} gives the following intractability result for~\cref{prob BPO}.

\begin{corollary}
\label{cor BPO hard}
Let $\bra{G_k}_{k=1}^\infty$ be a polynomial-time enumerable family of hypergraphs with $\tw(G_k)=k$. 
Suppose that for every two nodes $u,v$ contained in an edge of $G_k$, $\{u,v\}$ also is an edge of $G_k$ for all $k$.
Let $f$ be an algorithm that solves any instance $\Lambda_k$ of \cref{prob BPO} on hypergraph $G_k$ in time at most $T(k) \cdot \poly(\|\Lambda_k\|)$.
Then, assuming $\NP \not\subseteq \BPP$, $T(k)$ grows super-polynomially in $k$.
\end{corollary}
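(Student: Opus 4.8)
The plan is to derive \cref{cor BPO hard} as a direct reduction from \cref{th treewidth}. The key observation is that the hypothesis on $\{G_k\}_{k=1}^\infty$ — namely, that whenever two nodes $u,v$ lie together in some edge, the pair $\{u,v\}$ is itself an edge — means that $G_k$ contains, as a subhypergraph, the graph $\hat G_k$ consisting of exactly those $2$-element edges. By definition of the intersection graph, the intersection graph of $G_k$ coincides with $\hat G_k$ (viewed as a graph), so $\tw(\hat G_k) = \tw(G_k) = k$. Moreover, since $\{G_k\}$ is polynomial-time enumerable, so is $\{\hat G_k\}$: one computes $G_k$ in $\poly(k)$ time and then extracts the $2$-element edges. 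Thus $\{\hat G_k\}_{k=1}^\infty$ satisfies the hypotheses of \cref{th treewidth}.

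Next I would show that an algorithm $f$ solving \cref{prob BPO} on $G_k$ in time $T(k)\cdot\poly(\|\Lambda_k\|)$ yields an algorithm solving BQO on $\hat G_k$ within the same time bound (up to a polynomial factor). Given a BQO instance $\Lambda_k$ on $\hat G_k$ with cost vector $c \in \R^{V_k \cup E(\hat G_k)}$, define a cost vector $c'$ on $G_k$ by setting $c'_p = c_p$ for every node $p \in V_k$ and every $2$-element edge $p \in E(\hat G_k)$, and $c'_e = 0$ for every edge $e \in E(G_k)$ with $|e| \ge 3$. Since the objective of \cref{prob BPO} with cost $c'$ then has all higher-degree coefficients equal to zero, it is literally the BQO objective of $\Lambda_k$; feeding this instance to $f$ solves $\Lambda_k$. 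The encoding size of the constructed \cref{prob BPO} instance is $\|\Lambda_k\|$ plus the (polynomial-in-$k$) cost of listing the zero coefficients on the larger edges, hence polynomial in $\|\Lambda_k\|$ and $k$. (Strictly speaking one should verify the edges of $G_k$ with $|e|\ge 3$ are polynomially many in $k$, which follows from enumerability, since the whole description of $G_k$ fits in $\poly(k)$ bits.)

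Combining these two reductions, if $T(k)$ were bounded by a polynomial in $k$, then BQO on the family $\{\hat G_k\}$ — a polynomial-time enumerable family of graphs with $\tw(\hat G_k) = k$ — could be solved in time $\poly(k)\cdot\poly(\|\Lambda_k\|)$, contradicting \cref{th treewidth} under the hypothesis $\NP \not\subseteq \BPP$. Hence $T(k)$ grows super-polynomially in $k$.

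I do not anticipate a genuine obstacle here; the statement is, as the authors say, ``a fairly straightforward application'' of \cref{th treewidth}. The only points requiring a modicum of care are (i) confirming that the intersection graph of $G_k$ really is $\hat G_k$ under the stated hypothesis — this is where the hypothesis is used in an essential way, since without it the intersection graph could be strictly larger than the set of genuine $2$-edges — and (ii) bookkeeping the encoding sizes so that the ``$\poly(\|\Lambda_k\|)$'' factor is preserved and the enumerability of $\{\hat G_k\}$ is inherited from that of $\{G_k\}$. Neither is difficult, but both should be stated explicitly so the reduction is airtight.
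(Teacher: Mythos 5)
Your proposal is correct and follows essentially the same route as the paper: identify the intersection graph of $G_k$ with the $2$-element edges of $G_k$ (which is exactly where the hypothesis is used), observe that enumerability and treewidth are inherited, zero out the costs of all edges of cardinality greater than two, and invoke \cref{th treewidth}. The extra bookkeeping you flag about encoding sizes is sound and only makes the argument more explicit than the paper's version.
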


\begin{proof}
For every positive $k$, denote by $G'_k$ the intersection graph of $G_k$.
Note that, by assumption, each edge of $G'_k$ is also an edge of $G_k$.
Therefore, the family of intersection graphs $\bra{G'_k}_{k=1}^\infty$ is polynomial-time enumerable.
The proof then follows from \cref{th treewidth}, by setting to zero the costs of all the edges of $G_k$ of cardinality greater than two. 
\end{proof}

We observe that the inference problem on higher-order graphical models can be formulated as \cref{prob BPO} satisfying the assumption of~\cref{{cor BPO hard}}. Namely, the corresponding hypergraph $G_k$ has a fixed rank and therefore has polynomially many edges in $|V_k|$ (and hence in $k$). Moreover, for every edge of the hypergraph, all possible subsets of cardinality at least two are also in the edge set (see~\cite{KhaWang24} for a detailed derivation). 
Nonetheless,~\cref{cor BPO hard} is rather restrictive, as it does not consider general \emph{sparse} hypergraphs. Interestingly, as we show next, for \cref{prob PBO} on signed hypergraphs with a bounded rank, no such restrictive assumption is required. 

To prove our next intractability result, we make use of \emph{inflation operation} that was recently introduced in~\cite{dPKha23}.
Let $H=(V,S)$ be a signed hypergraph, let $s \in S$, and let $e \subseteq V$ such that $e_s \subset e$.
Denote by $I(s,e)$ the set of all possible signed edges $s'$ with $e_{s'} = e$ such that $\eta_s(v) = \eta_{s'}(v)$ for every $v \in e_s$. Define $S' := S \cup I(s,e) \setminus \{s\}$.
We say that $H'=(V,S')$ is obtained from $H$ by \emph{inflating} $s$ to $e$. 

In the following, we say that a function $r(k)$ is log-poly if $2^{r(k)}$ is a polynomial function. 
Recall that a hypergraph $G'=(V',E')$ is a \emph{partial hypergraph} of a hypergraph $G=(V,E)$ if $V' \subseteq V$ and $E' \subseteq E$. We are now ready to state our intractability result for pseudo-Boolean optimization:

\begin{theorem}
\label{th PBO hard}
Let $\bra{G_k}_{k=1}^\infty$ be a polynomial-time enumerable family of hypergraphs with $\tw(G_k)=k$ for all $k$, and with rank $r(k)$ that is upper bounded by a log-poly function in $k$.
Let $f$ be an algorithm that solves any instance $\Lambda_k$ of \cref{prob PBO} on a signed hypergraph with the underlying hypergraph $G_k$, in time $T(k) \cdot \poly(\|\Lambda_k\|)$.
Then, assuming $\NP \not\subseteq \BPP$, $T(k)$ grows super-polynomially in $k$.
\end{theorem}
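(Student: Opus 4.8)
The plan is to reduce from the intractability result for BQO established in \cref{th treewidth} (or equivalently \cref{cor treewidth k}), using the inflation operation to move from a dense hypergraph all of whose bounded-size subsets are edges to a signed hypergraph whose underlying hypergraph is the given sparse $G_k$. The key difficulty that \cref{cor BPO hard} leaves open is precisely that a sparse hypergraph need not contain all the pairs $\{u,v\}$ it needs to encode a quadratic objective; inflation is the tool introduced in \cite{dPKha23} that repairs this, at the cost of replacing one edge by exponentially-in-$r$ many parallel signed edges, which is why the log-poly bound on $r(k)$ is exactly what is needed to keep the construction polynomial.

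Concretely, first I would fix the family $\bra{G_k}_{k=1}^\infty$ and, for a given planar Max-2SAT instance on $\bar G$ with $n$ nodes, invoke \cref{gminor 2} to obtain (with high probability, in time polynomial in $\kappa(n)$, hence polynomial in $n$ by enumerability) a sequence of minor operations transforming $G_{\kappa(n)}$ into $\bar G$. As in the proof of \cref{th treewidth}, this gives a BQO instance $\Lambda$ on the \emph{intersection graph} $G'$ of $G_{\kappa(n)}$ whose objective coefficients have size polynomial in $\kappa(n)$. The second step is to realize this BQO instance as an instance of \cref{prob PBO} on a signed hypergraph $H$ whose underlying hypergraph is exactly $G_{\kappa(n)}$. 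For each pair $\{u,v\}$ that is an edge of $G'$ (so $u,v$ lie in a common edge $e$ of $G_{\kappa(n)}$ with $|e| \le r(k)$), I would start from a signed edge supported on $\{u,v\}$ carrying the quadratic term $c_{uv} z_u z_v$ and inflate it to $e$: by the defining property of inflation, the product $z_u z_v$ equals the sum over all parallel signed edges $s'$ in $I(s,e)$ of $\prod_{w\in e}\sigma_{s'}(z_w)$, so distributing the coefficient $c_{uv}$ appropriately over the $2^{|e \setminus \{u,v\}|} \le 2^{r(k)}$ inflated signed edges gives an equivalent objective whose nonlinear terms are all supported on edges of $G_{\kappa(n)}$. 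Handling the linear terms $c_v z_v$ requires no inflation. One must merge signed edges that become identical across different pairs $\{u,v\}$ (summing their coefficients, and discarding those with zero coefficient), so that the resulting $H$ has no identical signed edges, as required.

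The third step is the bookkeeping: the number of signed edges is at most (number of edges of $G'$) $\times\, 2^{r(k)}$, which is polynomial in $|V_{\kappa(n)}|$ and hence in $\kappa(n)$ by enumerability and the log-poly assumption on $r$; the coefficients are obtained from the $c_{uv}$ by splitting among $2^{r(k)}$ pieces and summing at most that many of them, so their bit sizes grow only polynomially. Hence $\|\Lambda_{\kappa(n)}\|$, the size of the constructed \cref{prob PBO} instance, is bounded by a polynomial in $\kappa(n)$, and therefore in $n$. Running $f$ on this instance solves it in time $T(\kappa(n))\cdot\poly(\|\Lambda_{\kappa(n)}\|)$, which recovers the optimum of the original planar Max-2SAT instance. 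Assuming for contradiction that $T(k)$ is polynomially bounded, $T(\kappa(n))$ is polynomial in $\kappa(n)$ and thus in $n$, giving a randomized polynomial-time algorithm for planar Max-2SAT, so planar Max-2SAT $\in \BPP$, contradicting $\NP \not\subseteq \BPP$. Therefore $T(k)$ grows super-polynomially in $k$.

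The main obstacle I anticipate is verifying carefully that inflation gives an \emph{exact} reformulation of each quadratic monomial and that the separate inflations for different pairs $\{u,v\}$ can be combined consistently into a single signed hypergraph with no identical signed edges while keeping both the number of signed edges and the coefficient sizes polynomial — in particular, confirming that the only place where $r(k)$ enters exponentially is the $2^{r(k)}$ factor, so that the log-poly hypothesis is precisely what makes the reduction polynomial. A secondary point to get right is that the underlying hypergraph of the constructed $H$ is exactly $G_k$ (no extra edges are created and, if some edge of $G_k$ carries no monomial, it can be assigned a zero-coefficient signed edge or simply absorbed, consistently with the convention $c_s \neq 0$ by dropping it), so that the hypothesis of the theorem is literally met.
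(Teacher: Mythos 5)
Your proposal is correct and follows essentially the same route as the paper: pass to the intersection graph $G'_{\kappa(n)}$ (noting it is polynomial-time enumerable with the same treewidth), invoke the BQO intractability of \cref{th treewidth}, and lift each quadratic term $c_{uv}z_uz_v$ supported on a pair inside an edge $g$ of $G_{\kappa(n)}$ via inflation to a sum of at most $2^{r(k)}$ parallel signed edges on $g$, with the log-poly rank bound keeping the instance size polynomial. The only cosmetic difference is that the paper assigns the full coefficient $c'_e$ to every signed edge in $I(e,g(e))$ (since the inflated products sum to the original monomial) rather than ``distributing'' it, and packages the reduction as a black-box application of \cref{th treewidth} instead of re-running the planar Max-2SAT argument inline; neither affects correctness.
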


\begin{proof}
For every positive $k$, let $G_k = (V_k,E_k)$, and let $G'_k = (V_k,E'_k)$ be the intersection graph of $G_k$, which has treewidth $k$.
Note that $|E'_k| \le |V_k|^2$, thus $|E'_k|$ is bounded by a polynomial in $k$.
It follows that $\bra{G'_k}_{k=1}^\infty$ is a polynomial-time enumerable family of graphs with $\tw(G'_k)=k$ for all $k$.
In the remainder of the proof, we show that every instance $\Lambda'_k$ of problem BQO on graph $G'_k$ can be polynomially reduced to an instance $\Lambda_k$ of \cref{prob PBO} on a signed hypergraph $H_k$, whose underlying hypergraph is a partial hypergraph of $G_k$.
Since each obtained \cref{prob PBO} can be stated as a \cref{prob PBO} on a signed hypergraph with the underlying hypergraph $G_k$, by setting the costs of additional $\poly(k)$ signed edges to zero, the result then follows from~\cref{th treewidth}.
In the remainder of the proof, we consider one fixed $k$.

First, we define the signed hypergraph $H_k$.
By the definition of $G'_k$, for each edge $e \in E'_k$, there exists an edge $g \in E_k$ such that $e \subseteq g$; we denote by $g(e)$ one such edge of $E_k$.
We then define the signed hypergraph $H_k=(V_k,S_k)$ obtained from $G'_k$ by inflating each $e \in E'_k$ to $I(e,g(e))$, where we recall that $I(e,g(e))$ denotes the set of all possible signed edges $s$ with $e_s = g(e)$ such that $\eta_{s}(v) = \eta_e(v)$ for every $v \in e$.
Note that, technically, the inflation operation is defined only for signed hypergraphs, while here $G'_k$ is non-signed; however, we can think of $G'_k$ as signed, with all signs equal to $+1$.
Denote by $r_k$ the rank of $G_k$; we have $|g(e)| \le r(k)$, therefore the number of signed edges $s \in S_k$ with $e_s = g(e)$ that we introduce in each inflation operation is at most $2^{r(k)}$.
As a result, $|S_k| \le 2^{r(k)} \cdot |E'_k|$, thus $|S_k|$ is bounded by a polynomial in $k$.
By the definition of $H_k$, we then have $I(e,g(e)) \subseteq S_k$.

Denote by $c'_v$ for $v \in V_k$, and $c'_e$ for $e \in E'_k$, the objective function coefficients of the instance $\Lambda'_k$.
Next, we define the cost coefficients of the instance $\Lambda_k$, which we denote by $c_v$ for $v \in V_k$, and $c_s$ for $s \in S_k$.
We initialize $c_v := c'_v$ for every $v \in V_k$, and $c_s := 0$ for every $s \in S_k$.
Next, for each edge $e \in E'_k$, we update the cost coefficients of $\Lambda_k$ recursively as follows:
For every $s \in I(e,g(e))$, we update $c_s := c_s + c'_e$.

To complete the proof, we observe that the instances $\Lambda'_k$ and $\Lambda_k$ are equivalent, in the sense that, for every feasible point $z_v \in \{0,1\}$, for $v \in V$, its objective value in $\Lambda'_k$ is equal to its objective value in $\Lambda_k$.
This claim follows because, for every $e \in E'_k$ the following equation holds: 
\begin{align*}
c'_e \prod_{v \in e} z_v
=
c'_e \prod_{v \in e} z_v \prod_{v \in g(e) \setminus e}(z_v + (1-z_v))
= 
c'_e \sum_{s \in I(e,g(e))} \prod_{v \in e_s} \sigma_s(z_v).
\end{align*}
\end{proof}

Note that \cref{th PBO hard} also holds if $r(k)$ is a constant greater than or equal to two.
Together with Theorem~\ref{th: alphaPoly}, \cref{th PBO hard} suggests that for pseudo-Boolean optimization on signed hypergraphs of log-poly rank, a bounded treewidth is a necessary and sufficient condition for tractability. It is important to note that the log-poly rank assumption is the key here, as otherwise such a conclusion is not valid (see, for example,~\cref{extended}). Similar to our discussion following \cref{th treewidth}, there is a subtle gap between these necessary and sufficient conditions; that is, to the best of our knowledge, for treewidth $k$ satisfying $k \in \omega(\log n_k)$ and $k \in o(n_k^{1/c})$ for any constant integer $c$,
no tractability or intractability result is known.









We conclude this section by posing two open questions; below, we give two statements which we do not know if they are true or false.
We say that a countable family of signed hypergraphs $\bra{H_k}_{k=1}^\infty$ is \emph{polynomial-time enumerable} if a description of $H_k$ is computable in $\poly(k)$ time. 
Our first statement is concerned with~\cref{prob PBO}:

\begin{statement}
\label{statement PBO harder}
Let $\bra{H_k}_{k=1}^\infty$ be a polynomial-time enumerable family of signed hypergraphs with $\tw(H_k)=k$ for all $k$, and with rank $r(k)$ that is upper bounded by a log-poly function in $k$.
Let $f$ be an algorithm that solves any instance $\Lambda_k$ of \cref{prob PBO} on signed hypergraph $H_k$, in time $T(k) \cdot \poly(\|\Lambda_k\|)$.
Then, assuming $\NP \not\subseteq \BPP$, $T(k)$ grows super-polynomially in $k$.
\end{statement}

If true, \cref{statement PBO harder} would constitute a stronger version of \cref{th PBO hard}.
In fact, while the thesis is the same, the assumptions of \cref{statement PBO harder} are much weaker than those of \cref{th PBO hard}.
Namely, for each $k$, in \cref{th PBO hard}, we assume that $f$ solves \cref{prob PBO} over instances on all signed hypergraphs with underlying hypergraph $G_k$.
In contrast, in \cref{statement PBO harder}, we assume that $f$ solves \cref{prob PBO} over instances on a single signed hypergraph $H_k$.
The idea used in the proof of \cref{th PBO hard} does not seem to be applicable to \cref{statement PBO harder}, since in the reduction we need to introduce a very specific set of signed edges.


Our second statement is similar to Theorem~\ref{th PBO hard}, but is concerned with~\cref{prob BPO}. 
It is important to note that the proof technique employed in Theorem~\ref{th PBO hard} is not applicable to prove the hardness for \cref{prob PBO}, as it uses the inflation operation, which involves the introduction of signed edges.

\begin{statement}
\label{statement BPO hard}
Let $\bra{G_k}_{k=1}^\infty$ be a polynomial-time enumerable family of hypergraphs with $\tw(G_k)=k$ for all $k$, and with rank $r(k)$ that is upper bounded by a log-poly function in $k$.
Let $f$ be an algorithm that solves any instance $\Lambda_k$ of \cref{prob BPO} on hypergraph $G_k$, in time $T(k) \cdot \poly(\|\Lambda_k\|)$.
Then, assuming $\NP \not\subseteq \BPP$, $T(k)$ grows super-polynomially in $k$.
\end{statement}

Notice that if~\cref{statement PBO harder} is true, then~\cref{statement BPO hard} is also true. 
We remark that, even if instead of a log-poly function in $k$, we assume that the rank is a constant,  
\cref{statement PBO harder,statement BPO hard} remain open.

\subsection{Lower bounds on the extension complexity of the pseudo-Boolean polytope}

In this section, we show that a bounded treewidth is a necessary condition for the existence of a polynomial-size extended formulation for the pseudo-Boolean polytope of signed hypergraphs with log-poly ranks. 
Recall that given a polytope $\P$, 
the \emph{extension complexity} of $\P$, denoted by $\xc(\P)$, is the minimum number of linear inequalities and equalities in an extended formulation of $\P$. 
Fiorini et al.~\cite{fiorini12} prove that over a complete graph, the Boolean quadric polytope has exponential extension complexity.

\begin{theorem}[\cite{fiorini12}]\label{fiorini}
    Let $K_n$ denote the complete graph with $n$ nodes. Then there exists a universal constant $C > 0$ such that for all $n$ we have $\xc(\BQP(K_n)) \geq 2^{Cn}$.
\end{theorem}

We then obtain the following result regarding the extension complexity of the multilinear polytope.

\begin{corollary}\label{xcMP1}
For any hypergraph $G=(V, E)$, denote by $G'$ the graph $(V,E')$, where $E'$ contains all edges in $E$ of cardinality two.
Then there exists a universal constant $C > 0$ such that, for every hypergraph $G$, we have $\xc(\MP(G)) \geq 2^{Cn}$, where $n$ is the size of the largest clique in $G'$.
\end{corollary}

\begin{proof}
Since $E' \subseteq E$, any extended formulation for the multilinear polytope $\MP(G)$ also serves as an extended formulation for $\BQP(G')$.
Since $G'$ contains a clique with $n$ nodes, any extended formulation for $\BQP(G')$ also serves as an extended formulation for $\BQP(K_n)$. 
By~\cref{fiorini} we deduce $\xc(\MP(G)) \ge \xc(\BQP(G')) \ge 2^{Cn}$ for some constant $C > 0$.    
\end{proof}

From \cref{xcMP1} we deduce that there exist $\alpha$-acyclic hypergraphs for which the multilinear polytope has exponential extension complexity.

\begin{corollary}
For every positive integer $n$, there exists an $\alpha$-acyclic hypergraph $G$ with $n$ nodes and $O(n^2)$ edges such that $\xc(\MP(G)) \ge 2^{Cn}$ for some constant $C > 0$.
\end{corollary}

\begin{proof}
Follows from \cref{xcMP1}, by considering the $\alpha$-acyclic hypergraph obtained from the complete graph on $n$ nodes by adding one edge containing all nodes.
\end{proof}

In the following we show that the extension complexity of the pseudo-Boolean polytope is exponential in the tree-width of its signed hypergraph. To this end, we make use of the following theorem relating the extension complexity of the Boolean quadric polytope to the treewidth of its graph. This result follows from the proof of theorem 3 in~\cite{AboFio19} together with the graph minor theorem~\cite{ChekChu16}: 





\begin{theorem}
\label{th xcBQP}
There exists a universal constant $\delta > \frac{1}{20}$ such that, for every graph $G$ with $n$ nodes, we have $\xc(\BQP(G)) \geq 2^{\Omega((\tw(G))^\delta+\log n)}$.
\end{theorem}

By~\cref{th xcBQP}, the following result regarding the extension complexity of the multilinear polytope is immediate.

\begin{corollary}\label{xcMP2}
For any hypergraph $G=(V, E)$, denote by $G'$ the graph $(V,E')$, where $E'$ contains all edges in $E$ of cardinality two.
Then there exists a universal constant $\delta > \frac{1}{20}$ such that for every hypergraph $G$ with $n$ nodes, we have $\xc(\MP(G)) \geq 2^{\Omega((\tw(G'))^\delta+\log n)}$.
\end{corollary}


  

Both~\cref{xcMP1} and~\cref{xcMP2} are rather restrictive, as they do not consider general \emph{sparse} hypergraphs. As we show next, thanks to the inflation operation, for the pseudo-Boolean polytope of signed hypergraphs with a log-poly rank, no such restrictive assumptions are required.
To this end, we make use of the following theorem that is concerned with the inflation operation. 
Let $H=(V,S)$ be a signed hypergraph, let $s \in S$, and let $e \subseteq V$ such that $e_s \subset e$.
Let $H'=(V,S')$ be obtained from $H$ by inflating $s$ to $e$. 
The following theorem indicates that if an extended formulation for $\PBP(H')$ is available, one can obtain an extended formulation for $\PBP(H)$ as well.

\begin{theorem} [Theorem~3 in~\cite{dPKha23}]
\label{lem inflation}
Let $H=(V,S)$ be a signed hypergraph, let $s \in S$, and let $e \subseteq V$ such that $e_s \subset e$.
Let $H'=(V,S')$ be obtained from $H$ by inflating $s$ to $e$.
Then an extended formulation of $\PBP(H)$ can be obtained by juxtaposing an extended formulation of $\PBP(H')$ and the equality constraint
\begin{equation}\label{eq inflation sum}
    z_s = \sum_{s' \in I(s,e)}{z_{s'}}.
\end{equation}
Moreover, if $\PBP(H')$ has a polynomial-size extended formulation and $|e|-|e_s| = O(\log\poly(|V|,|S|))$, then $\PBP(H)$ has a polynomial-size extended formulation as well.
\end{theorem}

We are now ready to state the main result of this section regarding the extension complexity of the pseudo-Boolean polytope.

\begin{theorem}
\label{th xcPBP}
There exists a universal constant $\delta > \frac{1}{20}$ such that, for every hypergraph $G$ with $n$ nodes whose rank is upper bounded by a log-poly function in $\tw(G)$, there exists a signed hypergraph $H$ with the underlying hypergraph $G$ for which we have $\xc(\PBP(H)) \geq 2^{\Omega((\tw(G))^\delta+\log n)}$.
\end{theorem}

\begin{proof}
Let $G = (V,E)$, and let $G' = (V,E')$ be the intersection graph of $G$, which has the treewidth $\tw(G') = \tw(G)$. 
By the definition of $G'$, for each edge $e \in E'$, there exists an edge $g \in E$ such that $e \subseteq g$; we denote by $g(e)$ one such edge of $E$.
We then define the signed hypergraph $\bar H=(V,\bar S)$ obtained from $G'$ by inflating each edge $e \in E'$ to $I(e,g(e))$, where we recall that $I(e,g(e))$ denotes the set of all possible signed edges $s$ with $e_s = g(e)$ such that $\eta_{s}(v) = \eta_e(v)$ for every $v \in e$.
Denote by $r$ the rank of $G$; we have $|g(e)| \le r$, therefore the number of signed edges $s \in \bar S$ with $e_s = g(e)$ that we introduce in an inflation operation is at most $2^{r}$.
As a result, $|\bar S| \le 2^{r} \cdot |E'|$. Since $|E'| \le |V|^2$ and $r$ is a log-poly function in $\tw(G)$, we deduce that $|\bar S|$ is upper bounded by a polynomial in $\tw(G)$. Notice that the underlying hypergraph of $\bar H$ is a partial hypergraph of $G$. 

Now let $H = (V, S)$ be a signed hypergraph with the underlying hypergraph is $G$ such that $S \supseteq \bar S$. 
It then follows that an extended formulation for $\PBP(H)$ serves as an extended formulation for $\PBP(\bar H)$ as well, implying that $\xc(\PBP(H)) \geq \xc(\PBP(\bar H))$. Now, let us consider $\PBP(\bar H)$.  By~\cref{lem inflation} an extended formulation for ${\rm BQP}(G')$ is given by an extended formulation for $\PBP(\bar H)$ together with $|E'|$ inequalities containing $|\bar S|$ additional variables. Since both
$|E'|$ and $|\bar S|$ are upper bounded by a polynomial in $\tw(G)$, and since by~\cref{th xcBQP}, $\xc(\BQP(G')) \geq 2^{\Omega((\tw(G))^\delta+\log n)}$ for some constant $\delta > \frac{1}{20}$, we deduce that $\xc(\PBP(\bar H)) \geq 2^{\Omega((\tw(G))^\delta+\log n)}$. This in turn implies that $\xc(\PBP(H)) \geq 2^{\Omega((\tw(G))^\delta+\log n)}$.
\end{proof}

We conclude this section by posing two open questions regarding the extension complexity of the pseudo-Boolean polytope and the multilinear polytope; below, we give two statements that we do not know if they are true or false. The first statement constitutes a stronger version of~\cref{th xcPBP}, while the second statement is concerned with the multilinear polytope.

\begin{statement}
\label{st ext 2}
There exists a universal  positive constant $\delta$ such that, for every signed hypergraph $H$ with $n$ nodes whose rank is upper bounded by a log-poly function in $\tw(H)$, we have $\xc(\PBP(H)) \geq 2^{\Omega((\tw(H))^\delta+\log n)}$.
\end{statement}

\begin{statement}
\label{st ext 1}
There exists a universal positive constant $\delta$ such that, for every hypergraph $G$ with $n$ nodes whose rank is upper bounded by a log-poly function in $\tw(G)$, we have $\xc(\MP(G)) \geq 2^{\Omega((\tw(G))^\delta+\log n)}$.
\end{statement}

Clearly, if \cref{st ext 2} is true, then \cref{st ext 1} is true as well. Even if instead of a log-poly function, we assume that the rank is a constant, \cref{st ext 2,st ext 1} remain open.

\section{Tractability beyond hypergraph acyclicity for pseudo-Boolean optimization}
\label{sec:sufficient}

In this section, we move beyond hypergraph acyclicity and obtain new sufficient conditions under which the pseudo-Boolean polytope admits a polynomial-size extended formulation. 
In a recent work~\cite{dPKha23}, the authors introduce the pseudo-Boolean polytope, a generalization of the multilinear polytope that enables them to unify and extend all prior results on the existence of polynomial-size extended formulations for the convex hull of the feasible region of binary polynomial optimization problems of degree at least three. In particular, given a signed hypergraph $H=(V,S)$, the authors prove that if the underlying hypergraph of $H$ is $\beta$-acyclic, then $\PBP(H)$ has an extended formulation of size polynomial in $|V|,|S|$.
Recall that a \emph{$\beta$-cycle} of length $\ell$ for some $\ell \ge 3$ in a hypergraph $G$ is a sequence $v_1, e_1, v_2, e_2, \dots , v_\ell, e_\ell, v_1$ such that $v_1, v_2, \dots , v_\ell$ are distinct nodes, $e_1, e_2, \dots , e_\ell$ are distinct edges, and $v_i$ belongs to $e_{i-1}, e_i$ and no other $e_j$ for all $i = 1, \dots , \ell$, where $e_0 = e_\ell$.
A hypergraph is \emph{$\beta$-acyclic} if it does not contain any $\beta$-cycles.

\begin{theorem}\label{extended} \cite{dPKha23}
Let $H=(V,S)$ be a signed hypergraph of rank $r$ whose underlying hypergraph is $\beta$-acyclic. Then the pseudo-Boolean polytope $\PBP(H)$ has a polynomial-size extended formulation with at most $O(r|S||V|)$ variables and inequalities.
Moreover, all coefficients and right-hand side constants in the extended formulation are $0,\pm 1$.
\end{theorem}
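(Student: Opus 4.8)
The plan is to prove Theorem~\ref{extended} by induction on the number of signed edges $|S|$, using a single elimination step that strips away a carefully chosen node and all signed edges incident to it, while only blowing up the size by a polynomial factor per step. The essential combinatorial fact to exploit is that a $\beta$-acyclic hypergraph always admits a \emph{nest-point}: a node $v$ such that the collection of underlying edges containing $v$ forms a chain under inclusion when restricted to $v$'s neighborhood (equivalently, $v$ is a simplicial-type vertex for $\beta$-acyclicity). Picking such a $v$ in the underlying hypergraph of $H$, I would set up a disjunctive description of $\PBP(H)$ by conditioning on the value $z_v \in \{0,1\}$, together with the values of the product terms $\prod_{u \in s, \eta_s(u)=-1, u \ne v} \dots$ that the chain structure forces to be nested.

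\medskip

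First I would make the reduction precise. Fix a nest-point $v$; let $s_1, \dots, s_m$ be the signed edges whose underlying edges contain $v$, ordered so that $e_1 \setminus \{v\} \subseteq e_2 \setminus \{v\} \subseteq \cdots \subseteq e_m \setminus \{v\}$ (this nesting is exactly the content of $\beta$-acyclicity localized at $v$, see the characterizations in \cite{fag83} and the treatment in \cite{dPKha23,dPKha23mMPA}). For each fixed value $z_v = \epsilon \in \{0,1\}$, every factor $\sigma_{s_i}(z_v)$ becomes the constant $\epsilon$ or $1 - \epsilon$; hence on the slice $\{z_v = \epsilon\}$, the defining equation $z_{s_i} = \prod_{u \in s_i} \sigma_{s_i}(z_u)$ either forces $z_{s_i} = 0$ (when the $v$-factor vanishes) or reduces to $z_{s_i} = \prod_{u \in s_i \setminus \{v\}} \sigma_{s_i}(z_u)$, which is a product term over a smaller signed edge $s_i' := (e_i \setminus \{v\}, \eta_{s_i}|_{e_i \setminus\{v\}})$ (or a linear term, if $|e_i \setminus \{v\}| = 1$, which is even easier). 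The chain condition guarantees that these residual signed edges $s_1', \dots, s_m'$ are themselves nested, which is what prevents the number of new auxiliary variables from exceeding $O(r|S|)$ over the whole induction. Thus $\PBP(H)$ is the convex hull of the union of two polytopes, each of which is (a lifting of) $\PBP(H')$ for a signed hypergraph $H'$ on one fewer node, with a few extra $0,\pm 1$ equations and the box constraints on $z_v$ and the $z_{s_i}$.

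\medskip

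Next I would invoke Balas's extended formulation for the union of two polytopes: if $P_0$ and $P_1$ each have extended formulations of size $N$, then $\conv(P_0 \cup P_1)$ has one of size $O(N)$, obtained by taking two scaled copies of the variables plus a convexity multiplier $\lambda$, with all the new constraints having $0, \pm 1$ coefficients. Applying this at each of the $|V|$ elimination steps, and noting that each step adds $O(r|S|)$ new variables and inequalities (one per residual signed edge plus a constant number of bookkeeping constraints, all with $0, \pm 1$ data), gives a total of $O(r|S||V|)$ variables and inequalities, with the claimed integrality of the coefficients preserved because every operation — restriction, substitution of a constant $0/1$ value, and Balas disjunction — keeps the coefficient matrix in $\{0, \pm 1\}$. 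The base case is a signed hypergraph with no edges, whose pseudo-Boolean polytope is just the unit cube $[0,1]^V$.

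\medskip

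\textbf{The hard part} will be the careful bookkeeping that the nesting of the residual signed edges really does keep the per-step growth at $O(r|S|)$ rather than, say, doubling the number of signed edges each time — in particular, one must argue that the residual edges $s_i'$ obtained in the two branches can be identified across branches and matched up with edges already present (or cheaply added) in $H'$, so that the recursion is on a genuinely smaller object and the sizes telescope. A secondary technical point is handling parallel signed edges created by the elimination (two distinct $s_i, s_j$ with $e_i \setminus \{v\} = e_j \setminus \{v\}$ but different signs on the common part), and verifying that Balas's construction composes cleanly through $|V|$ iterations without the constant factors compounding into something super-polynomial. Everything else — the disjunctive slicing on $z_v$, the substitution, and the coefficient tracking — is routine once the nest-point ordering is in hand.
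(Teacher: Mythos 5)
Your overall skeleton (a nest-point elimination order, which exists precisely because the underlying hypergraph is $\beta$-acyclic by \cref{th beta iff}, plus a per-node local construction) matches the architecture of the proof in \cite{dPKha23} that this paper recaps and generalizes in Section~3.2. But the engine you propose for the elimination step --- slicing on $z_v\in\{0,1\}$ and gluing the two slices back together with Balas's union-of-polytopes formulation --- has a fatal flaw that you yourself flag as ``the hard part'' and then defer. Balas's construction requires a \emph{separate scaled copy} of an extended formulation for each disjunct, and each of your two disjuncts is a lifting of the pseudo-Boolean polytope of essentially the full smaller signed hypergraph (all signed edges not containing $v$ survive in both branches). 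You cannot share one copy of the recursive formulation between the two branches. Hence the size recursion is $N(n)\ge 2\,N(n-1)$, and after $|V|$ elimination steps the formulation has size $\Omega(2^{|V|})$, not $O(r|S||V|)$. This is not a matter of constant factors compounding or of matching residual edges across branches; it is an exponential blowup inherent to iterating disjunctive programming down an elimination order.

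The actual proof avoids disjunction at the recursive level entirely. The key ingredient is the decomposition theorem (\cref{th decomp}): after adding the residual signed edges $s_i-v$ to $S$, the polytope $\PBP(H)$ is literally the \emph{intersection} of (descriptions of) $\PBP(H_v)$ and $\PBP(H-v)$, where $H_v$ is the small ``pointed'' signed hypergraph living on the largest underlying edge through $v$. Juxtaposing descriptions instead of taking the convex hull of a union means the sizes add, $N(n)=N(n-1)+O(r|S|)$, which telescopes to the claimed bound; the $0,\pm 1$ coefficients come from the explicit formulation of the pointed piece in \cref{pointedHull}. A disjunction on $z_v$ can legitimately appear only inside that one-shot construction for $\PBP(H_v)$, where it is applied a bounded number of times and does not recurse. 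So the missing idea in your proposal is precisely the decomposability statement --- showing that conditioning on $z_v$ is unnecessary because the two pieces already intersect in exactly the right polytope --- and without it, or some substitute that prevents the doubling, the argument does not go through.
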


Theorem~\ref{extended} is a generalization of the earlier result in~\cite{dPKha23mMPA} stating that the multilinear polytope of a $\beta$-acyclic hypergraph has a polynomial-size extended formulation.  Namely, Theorem~\ref{extended}
only requires the $\beta$-acyclicity of the underlying hypergraph of $H$. Indeed, the multilinear hypergraph of $H$, \ie $\mh(H)$ may contain many $\beta$-cycles. 
In this section, we present a significant generalization of Theorem~\ref{extended} that gives polynomial-size extended formulations for the pseudo-Boolean polytope of a large class of signed hypergraphs whose underlying hypergraph contains $\beta$-cycles. 
To define this class of hypergraphs, we make use of the notion of ``gap'' introduced in~\cite{dPKha23}. In this paper, we use this concept in a slightly more general form, which we define next.

Consider a hypergraph $G=(V,E)$, and let $V' \subseteq V$; we define the \emph{gap} of $G$ induced by $V'$ as
\begin{equation}\label{Hgap}
\gap(G, V') := \max\Big\{ |V'| - |e \cap V'| : e \in E , \ e \cap V' \neq \emptyset \Big\}.
\end{equation}
Moreover, for a signed hypergraph $H=(V,S)$, and $V' \subseteq V$, the gap of $H$ induced by $V'$, denoted by $\gap(H, V')$ is defined as the gap of the underlying hypergraph of $H$ induced by $V'$. 
In~\cite{dPKha23} the authors prove that thanks to the inflation operation, if $\gap(H,V) = O(\log\poly(|V|,|S|))$, the pseudo-Boolean polytope $\PBP(H)$ has a polynomial-size extended formulation.

\subsection{Nest-set, nest-set width, and nest-set gap}

In the following, we introduce some new hypergraph theoretic notions that enable us to obtain new polynomial-size extended formulations for the pseudo-Boolean polytope.
The proof of Theorem~\ref{extended} relies on the key concept of nest points. 
Let $G=(V,E)$ be a hypergraph. A node $v \in V$ is a \emph{nest point} of $G$ if the set of the edges of $G$ containing $v$ is totally ordered with respect to inclusion. It is simple to see that nest points can be found in polynomial time.
Let $N \subset V$.
We define the hypergraph obtained from $G$ by \emph{deleting} $N$ as the hypergraph $G-N$ with set of nodes $V \setminus N$ and set of edges $\{e \setminus N : e \in E, \ |e \setminus N| \ge 2\}$. 
When $N = \{v\}$, we write $G-v$ instead of $G-N$.
A \emph{nest point elimination order} is an ordering $v_1, \dots, v_n$ of the nodes of $G$, such that $v_1$ is a nest point of $G$, $v_2$ is a nest point of $G - v_1$, and so on, until $v_n$ is a nest point of $G - v_1 - \dots - v_{n-1}$.
The following result provides a characterization of $\beta$-acyclic hypergraphs, in terms of nest points:
\begin{theorem}[\cite{Dur12}]\label{th beta iff}
A hypergraph $G$ is $\beta$-acyclic if and only if it has a nest point elimination order.
\end{theorem}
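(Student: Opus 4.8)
The plan is to prove both implications by isolating a few elementary facts relating $\beta$-cycles, node deletion, and nest points, and then running a short induction; the only substantive ingredient is the statement that every non-empty $\beta$-acyclic hypergraph has a nest point.

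\emph{Two preliminary facts.} First I would record that \emph{no node of a $\beta$-cycle is a nest point}: if $u_1,e_1,\dots,u_\ell,e_\ell,u_1$ is a $\beta$-cycle, then for each $i$ the node $u_{i-1}$ lies in $e_{i-1}$ but not in $e_i$, and $u_{i+1}$ lies in $e_i$ but not in $e_{i-1}$ (the hypothesis $\ell\ge 3$ rules out the degenerate index coincidences), so $e_{i-1}$ and $e_i$ are incomparable edges containing $u_i$, and hence the edges through $u_i$ are not totally ordered. Second, I would show that \emph{deleting a node preserves $\beta$-acyclicity}, and conversely that \emph{a $\beta$-cycle whose node set avoids $v$ survives the deletion of $v$}. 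For the former, given a $\beta$-cycle of $G-v$ with edges $f_1,\dots,f_\ell$, lift each $f_i$ to an edge $g_i\in E$ with $f_i=g_i\setminus\{v\}$; distinctness of the $f_i$ forces distinctness of the $g_i$, and since no cycle node equals $v$ the incidence pattern $u_i\in g_{i-1}\cap g_i$ and $u_i\notin g_j$ otherwise is inherited, yielding a $\beta$-cycle of $G$. For the latter, if the node set of $C$ is disjoint from $\{v\}$ then each $e_i\setminus\{v\}$ still contains the two distinct nodes $u_i,u_{i+1}$ (hence is an edge of $G-v$), the sets $e_i\setminus\{v\}$ stay pairwise distinct, and the incidence pattern is unchanged, so $C$ becomes a $\beta$-cycle of $G-v$.

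\emph{Reduction to one lemma, and the two implications.} The key lemma is: \emph{every $\beta$-acyclic hypergraph $G=(V,E)$ with $V\neq\emptyset$ has a nest point} (if $E=\emptyset$, any node works). Granting it, the forward implication is an induction on $|V|$: choose a nest point $v_1$, note $G-v_1$ is $\beta$-acyclic by the second fact, apply induction to obtain a nest-point elimination order $v_2,\dots,v_n$ of $G-v_1$, and prepend $v_1$. For the reverse implication, suppose $G$ had a nest-point elimination order $v_1,\dots,v_n$ together with a $\beta$-cycle $C$. By the first fact $v_1$ is not a node of $C$, so by the second fact $C$ survives as a $\beta$-cycle of $G-v_1$; since $v_2$ is a nest point of $G-v_1$, again $v_2$ avoids the survived cycle, so it survives in $G-v_1-v_2$; iterating, the cycle survives all the way to $G-v_1-\dots-v_n$, which has empty node set and hence no edges, contradicting that a $\beta$-cycle uses at least three distinct edges.

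\emph{The main obstacle.} What remains, and is the crux, is the nest-point lemma. I would prove it by contradiction: take a $\beta$-acyclic $G=(V,E)$ with $V\neq\emptyset$, necessarily $E\neq\emptyset$, having no nest point, and with $|V|+|E|$ minimum. For each $v\in V$ the smaller hypergraph $G-v$ is $\beta$-acyclic, cannot be nodeless (else $V=\{v\}$ forces $E=\emptyset$), and so by minimality has a nest point $w_v\neq v$; as $w_v$ is not a nest point of $G$, the family of edges of $G$ containing $w_v$ is not a chain but becomes one once $v$ is removed from every edge (edges that shrink to size at most one being discarded), so $v$ is a highly constrained ``resolver'' of every incomparability at $w_v$. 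Playing this rigidity off against itself as $v$ ranges over $V$, equivalently performing a greedy walk that alternately moves from a node to an edge incomparable to the previous one and then to a fresh node and truncating at the first repeated node, one extracts a short closed walk which, after pruning, realizes the pattern of a $\beta$-cycle and so contradicts $\beta$-acyclicity. The delicate step, where the real work lies, is enforcing the clause ``$u_i$ belongs to $e_{i-1},e_i$ and no other $e_j$'': a careless walk only secures the two consecutive memberships, so the walk must be chosen extremally (shortest, or along inclusion-maximal edges) to guarantee that no node of the extracted cycle secretly lies in a non-adjacent cycle edge. This is precisely the content we attribute to \cite{Dur12}.
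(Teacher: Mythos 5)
This statement is quoted from Duris~\cite{Dur12}; the paper gives no proof of its own, so there is nothing internal to compare your argument against --- the question is only whether your proposal stands on its own. It does not, because the one substantive ingredient is missing. Your reverse implication is essentially complete: ``no node of a $\beta$-cycle is a nest point'' and ``a $\beta$-cycle avoiding $v$ survives in $G-v$'' together with the iteration down the elimination order do yield $\beta$-acyclicity. (One small unproven assertion there: you claim the sets $e_i\setminus\{v\}$ stay pairwise distinct, which is not automatic for the deletion operation as defined in the paper --- two distinct edges differing only in $v$ collapse to one. It is true here, but it needs the short argument that if $e_i\setminus\{v\}=e_j\setminus\{v\}$ then $u_i\in e_j$ forces $j\in\{i-1,i\}$, and then $u_{i+1}\in e_{i-1}$ gives a contradiction with $\ell\ge 3$.) Your reduction of the forward implication to the lemma ``every non-empty $\beta$-acyclic hypergraph has a nest point'' is also the standard and correct move.

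The genuine gap is that you do not prove that lemma, and it is the entire content of the theorem. Your minimal-counterexample setup (for each $v$, the hypergraph $G-v$ has a nest point $w_v$ whose incomparabilities are all ``resolved'' by $v$) is a reasonable starting point, but ``playing this rigidity off against itself'' and ``a greedy walk \ldots truncating at the first repeated node'' is not an argument: you do not specify the walk, do not show it closes up, and --- as you yourself concede --- do not establish the clause that $u_i$ lies in $e_{i-1}$, $e_i$ and \emph{no other} $e_j$, which is exactly where such proofs live or die. A closed walk alternating incomparable edges only gives what is sometimes called a weak $\beta$-cycle, and extracting a genuine $\beta$-cycle from it (by a minimality or maximal-edge argument) is the hard step; writing ``this is precisely the content we attribute to \cite{Dur12}'' amounts to citing the theorem you are trying to prove. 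As it stands, the proposal rigorously establishes only the easy direction.
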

Recently, in the context of satisfiability problems, Lanzinger~\cite{lanz23} generalizes the concept of nest points  to~\emph{nest-sets}, which we define next. Let $G=(V,E)$ be a hypergraph and let $N \subseteq V$. 
We say that $N$ is a \emph{nest-set} of $G$ if the set 
\begin{equation}\label{nestsetminus}
\{e \setminus N: e \in E, \ e \cap N \neq \emptyset\},
\end{equation}
is totally ordered with respect to inclusion. 
Clearly if $|N|=1$, then $N$ consists of a nest point of $G$. Let $N_1,N_2, \dots,N_t$ be pairwise disjoint subsets of $V$ such that $\cup_{i \in [t]}{N_i}=V$. 
We say that $\N= N_1, \cdots, N_t$  is a \emph{nest-set elimination order} of $G$, if $N_1$ is a nest-set of $G$, $N_2$ is a nest-set of $G-N_1$, and so on, until $N_t$ is nest-set of $G - N_1 - \dots - N_{t-1}$. 

Given a nest-set elimination order $\N$ of $G$, we define \emph{nest-set width} of $\N$, denoted by ${\rm nsw}_{\N}(G)$, as the maximum cardinality of any element in $\N$.  
We then define \emph{nest-set gap} of $\N$ as
$$
{\rm nsg}_{\N}(G) := \max\Big\{\gap(G-N_1-\cdots-N_{i-1}, N_i):i \in [t]\Big\},
$$ 
where $\gap(\cdot,\cdot)$ is defined by~\eqref{Hgap}.
From these definitions, it follows that 
\begin{equation}\label{relation}
{\rm nsg}_{\N}(G) \leq {\rm nsw}_{\N}(G) -1.
\end{equation} 
In fact, ${\rm nsg}_{\N}(G)$
can be much smaller than ${\rm nsw}_{\N}(G)$. The following example demonstrates this fact.

\begin{example}\label{secex}
Consider a hypergraph $G=(V,E)$ whose edge set $E$ consists of all subsets of $V$ of cardinality $|V|-1$.
Letting $\N=V\setminus \{\bar v\}, \{\bar v\}$ for some $\bar v \in V$, it follows that ${\rm nsw}_{\N}(G) = |V|-1$,
while ${\rm nsg}_{\N}(G) = 1$. 
\end{example}

We define the nest-set width of $G$, denoted by ${\rm nsw}(G)$, as the minimum value of ${\rm nsw}_{\N}(G)$ over all nest-set elimination orders $\N$ of $G$. 
Similarly, we define the nest-set gap of $G$, denoted by ${\rm nsg}(G)$, as the minimum value of ${\rm nsg}_{\N}(G)$ over all nest-set elimination orders $\N$ of $G$. Consider the hypergraph of~\cref{secex}. It can be checked that  ${\rm nsw}(G) = |V|-1$,
while ${\rm nsg}(G) = 1$.
We define the nest-set gap of a signed hypergraph as the nest-set gap of its underlying hypergraph.
Similarly, we define the notions of nest point, nest-set, and nest-set width, also for signed hypergraphs. 
See \cref{figure2,figure4,figure5} for an illustration of nest-sets, nest-set widths, and nest-set gaps.

\begin{figure}
\centering
\includegraphics[scale=0.6]{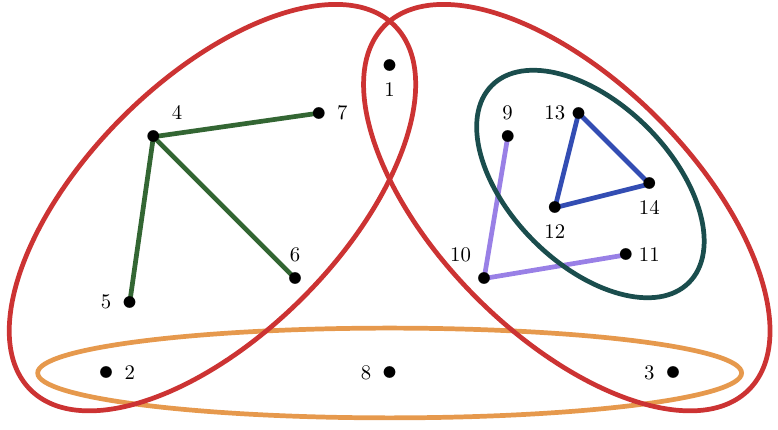}
\caption{A hypergraph $G$ with ${\rm nsw}(G) = 2$ and ${\rm nsg}(G) = 1$. A nest-set elimination order of $G$ is given by $\N=\{8\},\{7\}, \{6\}, \{5\}, \{4\}, \{12,13\}, \{14\},\{9,10\}, \{11\}, \{2,3\}, \{1\}$. $G$ contains $\beta$-cycles of length three.}
\label{figure2}
\end{figure}

\begin{figure}
\centering
\includegraphics[scale=0.6]{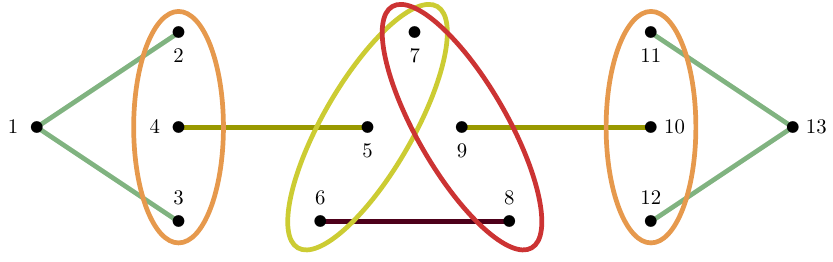}
\caption{A hypergraph $G$ with ${\rm nsw}(G) = 2$ and ${\rm nsg}(G) = 1$. A nest-set elimination order of $G$ is given by $\N=\{1,2\},\{3\}, \{4\}, \{5\}, \{6,7\},\{8\}, \{9\},\{10\}, \{11,12\},\{13\}$. $G$ contains $\beta$-cycles of length three.}
\label{figure4}
\end{figure}

\begin{figure}
\centering
\includegraphics[scale=0.6]{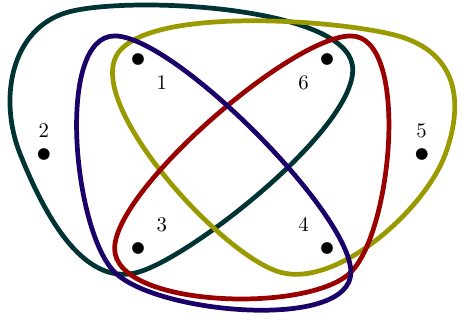}
\caption{A hypergraph $G$ with ${\rm nsw}(G) = 3$ and ${\rm nsg}(G) = 1$. A nest-set elimination order of $G$ is given by $\N=\{2\}, \{5\}, \{1,3,4\}, \{6\}$. $G$ contains $\beta$-cycles of length three.}
\label{figure5}
\end{figure}

From~\cref{th beta iff} it follows that ${\rm nsw} (G) = 1$, if and only if $G$ is $\beta$-acyclic; that is, hypergraphs with
${\rm nsw} (G) \geq  2$ contain $\beta$-cycles.
Hence, a natural question is whether ${\rm nsw} (G)$ is in general related to the length of $\beta$-cycles of $G$. 
In lemma~6 in~\cite{lanz23}, the author proves that if $G$ has a $\beta$-cycle of length $\ell$, then ${\rm nsw} (G) \geq \ell-1$.
Next, we strengthen this lower bound on the nest-set width: 

\begin{proposition}
\label{prop nest-set cycle propagation}
Let $C=v_1, e_1, v_2, e_2, \ldots, v_{\ell}, e_{\ell}, v_1$ be a $\beta$-cycle of length $\ell$ in a hypergraph $G$. 
Let $U_1 := (e_{\ell} \cap e_1) \setminus \cup_{j=2}^{\ell-1} e_j$.
Similarly, for $i=2,\dots,\ell$, let $U_i := (e_{i-1} \cap e_i) \setminus \cup_{j \in [\ell] \setminus \{i-1,i\}} e_j$.
For every nest-set $N$ of $G$ we have that $N \cap (\cup_{j=1}^\ell U_j)$ is the empty set, or contains at least $\ell-1$ sets among $U_j$, for $j \in [\ell]$.
Furthermore, ${\rm nsw} (G) \geq \min_{k \in [j]} \sum_{j \in [\ell] \setminus k} |U_j|$.
\end{proposition}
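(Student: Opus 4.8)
The plan is to analyze how a nest-set $s$ can interact with the sets $U_1,\dots,U_\ell$ attached to a $\beta$-cycle $C$, and then to extract the claimed lower bound on nest-set width from the size constraint this forces. The starting observation is that, by definition of a $\beta$-cycle, each node of $U_i$ lies in exactly the two edges $e_{i-1}$ and $e_i$ of the cycle (indices mod $\ell$), and in no other cycle edge; moreover the $U_i$ are pairwise disjoint. I would first record the following ``propagation'' principle: if $s$ is a nest-set of $G$ and $s$ meets $U_i$ but misses $U_{i+1}$, then consider the two cycle edges $e_i \supseteq U_i \cup U_{i+1}$ and pick any other edge $f$ among the cycle edges; after deleting $s$, the traces $e_i \setminus s$ and $f \setminus s$ must be comparable with respect to inclusion. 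Since $s$ hits $U_i$ but not $U_{i+1}$, and since $U_{i+1} \subseteq e_i$ while $U_{i+1} \cap f = \emptyset$ for the appropriate choice of $f = e_{i+1}$ or a neighbor, I can exhibit an element of $e_i \setminus s$ not in $f \setminus s$; comparability then forces $f \setminus s \subseteq e_i \setminus s$, and by chasing this around the cycle I derive a contradiction (or force $s$ to meet all the remaining $U_j$'s). This is the combinatorial heart of the argument.

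More carefully, I would argue as follows. Suppose $s \cap (\cup_{j=1}^\ell U_j) \neq \emptyset$; pick $i$ with $s \cap U_i \neq \emptyset$. I claim $s$ meets every $U_j$ with at most one exception. Assume for contradiction that $s$ misses two of them, say $U_a$ and $U_b$ with $a \neq b$. The cycle edges, once we delete $s$, must form a chain under inclusion because any two cycle edges share a node (consecutive ones share a $U$-set; this is exactly the structure of a $\beta$-cycle, where the edges pairwise intersect). Now $e_{a-1}$ and $e_a$ both contain $U_a$, which survives deletion of $s$; similarly $e_{b-1},e_b$ both contain the surviving set $U_b$. On the other hand $U_a \cap e_b = \emptyset = U_a \cap e_{b-1}$ unless $a \in \{b-1,b\}$, and symmetrically for $U_b$. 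Choosing $a,b$ as far apart as possible on the cycle, I get two cycle edges whose post-deletion traces each contain a nonempty set disjoint from the other, contradicting that the traces form a chain. Hence at most one $U_j$ is missed, i.e. $s$ contains a transversal element from at least $\ell-1$ of the $U_j$'s — which is the first assertion. (I should double-check the boundary/index bookkeeping when the two missed sets are adjacent on the cycle; there the argument needs the third edge to separate them, which is available since $\ell \ge 3$.)

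For the nest-set width bound: fix any nest-set elimination order $\N = N_1,\dots,N_t$ achieving ${\rm nsw}(G)$. Let $N_p$ be the first set in the order that meets $\cup_j U_j$. I claim the $\beta$-cycle $C$ survives intact in $G - N_1 - \cdots - N_{p-1}$: indeed each $N_i$ with $i<p$ is disjoint from all $U_j$, so no cycle edge $e_j$ loses any of its private intersection nodes $U_j$ or $U_{j+1}$, and one checks the cycle structure (distinct nodes $v_j \in U_j$, distinct edges, correct incidences) is preserved — this is where I must be a little careful, but deletion of nodes outside $\cup U_j$ cannot destroy the cyclic incidence pattern among the $U_j$'s. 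Applying the first part of the proposition to the nest-set $N_p$ of this deleted hypergraph, $N_p$ contains an element of $U_j$ for all $j$ in some index set $[\ell] \setminus \{k\}$, hence $|N_p| \ge |N_p \cap \cup_j U_j| \ge \sum_{j \in [\ell]\setminus k} 1$. To get the stated bound $\sum_{j \in [\ell]\setminus k}|U_j|$ rather than just $\ell-1$, I need the sharper claim that $N_p$ must contain \emph{all} of each $U_j$ it meets (for $j \neq k$); this follows because if $N_p$ met $U_j$ but missed a node $w \in U_j$, then $w$ survives in the trace $e_{j-1}\setminus N_p$ and in $e_j \setminus N_p$ but the chain condition among all cycle-edge traces, combined with $N_p$ meeting all other $U$'s, pins down these traces tightly enough to force $w \in N_p$ after all — I would verify this by the same comparability chase. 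Minimizing over the choice of the excepted index $k$ then gives ${\rm nsw}(G) \ge |N_p| \ge \min_{k \in [\ell]} \sum_{j \in [\ell] \setminus k} |U_j|$.

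\textbf{Main obstacle.} The delicate point is the exact index bookkeeping in the comparability argument — in particular the case where the (at most one) missed set $U_k$ is adjacent on the cycle to a set $U_j$ that $N_p$ meets only partially, and making the ``chain of traces forces $w \in N_p$'' step fully rigorous. I expect this to require a careful, explicit choice of which two cycle edges to compare at each step, using $\ell \ge 3$ to always have a separating third edge; the rest (disjointness of the $U_j$, survival of the cycle under earlier deletions, the final minimization) is routine.
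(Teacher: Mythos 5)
Your overall strategy (propagating the total-order constraint on traces around the cycle) is the same as the paper's, and your treatment of the second half --- checking that the cycle survives the deletions preceding the first nest-set that touches $\cup_j U_j$, then minimizing over the excepted index --- is fine, indeed more explicit than the paper's. But the combinatorial core has a genuine gap. You assert that ``the cycle edges, once we delete $s$, must form a chain under inclusion because any two cycle edges share a node.'' This fails on two counts. First, the nest-set condition only forces comparability among the traces $e \setminus s$ of edges $e$ with $e \cap s \neq \emptyset$; a cycle edge disjoint from $s$ imposes no constraint whatsoever, so you may not compare $e_{a-1}\setminus s$ with $e_{b-1}\setminus s$ until you have shown that both edges actually meet $s$. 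Second, non-consecutive edges of a $\beta$-cycle need not share any node at all. Determining which cycle edges meet $s$ is precisely the propagation you are trying to sidestep, so your contradiction does not close.

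Moreover, your ``first assertion'' as proved (``$s$ meets at least $\ell-1$ of the $U_j$'') is strictly weaker than what the proposition states (``$s \cap (\cup_j U_j)$ contains at least $\ell-1$ of the sets $U_j$,'' i.e.\ full containment $U_j \subseteq s$), and the strengthening you defer to an unverified ``comparability chase'' is not an optional refinement --- it is exactly what the bound $\sum_{j\neq k}|U_j|$ requires. The correct propagation delivers it for free: if $u_1 \in s\cap U_1$, then $e_1$ and $e_\ell$ both meet $s$, and since $U_2 \subseteq e_1\setminus e_\ell$ while $U_\ell \subseteq e_\ell \setminus e_1$, the traces $e_1\setminus s$ and $e_\ell\setminus s$ can be comparable only if $U_2 \subseteq s$ or $U_\ell\subseteq s$ \emph{in their entirety} (a single surviving node of each already makes the traces incomparable). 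Whichever containment holds brings a new edge into the constrained family; iterating --- comparing $e_i$ with $e_\ell$ while the two are non-adjacent on the cycle, and finishing with the pair $e_1,e_{\ell-1}$ --- yields $U_j\subseteq s$ for at least $\ell-1$ indices $j$ directly. So the repair is not to patch the chase at the end, but to replace the ``all traces form a chain'' premise with this containment propagation from the start.
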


\begin{proof}
Let $U := \cup_{j=1}^\ell U_j$.
Suppose that $N \cap U$ is nonempty. 
That is, at least one node in $U$ is in $N$. 
Since we can rotate the indices of a cycle arbitrarily, we assume, without loss of generality, that this is a node $u_1 \in U_1$. 
Then $e_1$ and $e_{\ell}$ are both in $I(N)$, where we denote by $I(N)$ the set of edges of $G$ that contain at least one node in $N$.
By the definition of $U_2$, we have $U_2 \subseteq e_1 \setminus e_{\ell}$.
Similarly, $U_\ell \subseteq e_{\ell} \setminus e_1$. 
Thus, $e_1 \setminus N$ and $e_{\ell} \setminus N$ can only be comparable with respect to inclusion if $U_2$ or $U_{\ell}$ is contained in $N$. Suppose, without loss of generality, $U_2 \subseteq N$.

Then we have $e_2$ and $e_{\ell}$ in $I(N)$ and the same argument can be applied again, as long as the two edges are not adjacent in the cycle, and we find that $U_3$ or $U_{\ell}$ is contained in $N$, without loss of generality, $U_3 \subseteq N$.
We then obtain $U_2, U_3, \dots, U_{\ell-1} \subseteq N$.

Next, we consider the edges $e_1$ and $e_{\ell-1}$, which are both in $I(N)$. 
We have $U_1 \subseteq e_1 \setminus e_{\ell-1}$ and $U_\ell \subseteq e_{\ell-1} \setminus e_1$. 
Thus, $e_1 \setminus N$ and $e_{\ell-1} \setminus N$ can only be comparable with respect to inclusion if $U_1$ or $U_{\ell}$ is contained in $N$.
In either case, $N$ contains at least $\ell-1$ sets among $U_j$, for $j \in [\ell]$.

Since any node in the $\beta$-cycle $C$ has to be removed at some point in any nest-set elimination order, we obtain ${\rm nsw} (G) \geq \min_{k \in [j]} \sum_{j \in [\ell] \setminus k} |U_j|$.
\end{proof}

Notice that if the edge set of hypergraph $G$ of \cref{prop nest-set cycle propagation} is $E=\{e_1,\cdots, e_\ell\}$, then the bound given in this lemma is sharp; \ie  ${\rm nsw} (G) = \min_{k \in [j]} \sum_{j \in [\ell] \setminus k} |U_j|$. The next example demonstrates that the lower bound given in \cref{prop nest-set cycle propagation} is not sharp, in general.

\begin{example}\label{onlyex}
Consider a graph $G=(V,E)$ with $V=\{v_1,\cdots,v_n\}$ for some $n \geq 5$ and $E=\{\{v_1, v_i\}, \{v_2, v_i\}, \forall i \in \{3,\cdots,n\}\}$. See Figure~\ref{figure3} for an illustration.
It can be checked that $G$ contains cycles of length four only hence the lower bound on ${\rm nsw} (G)$ given by~\cref{prop nest-set cycle propagation} is three. However, ${\rm nsw} (G) = n-1 > 3$ since by assumption $n \geq 5$. To see this, first note that either $v_1$ or $v_2$ should be in a nest set, as otherwise set~\eqref{nestsetminus} will contain $\{v_1\}$ and $\{v_2\}$ and hence will not be totally ordered. Without loss of generality, suppose that $v_1$ is in the nest set. Second, note that $v_i$, for all $i \in \{3,\cdots,n\}$ should also be in the nest set as otherwise set~\eqref{nestsetminus} will contain $\{v_2\}$ and $\{v_j\}$ for some $j \in \{3,\cdots,n\}$ and hence will not be totally ordered. Therefore, ${\rm nsw} (G) = n-1$.
\end{example}

\begin{figure}
\centering
\includegraphics[scale=0.6]{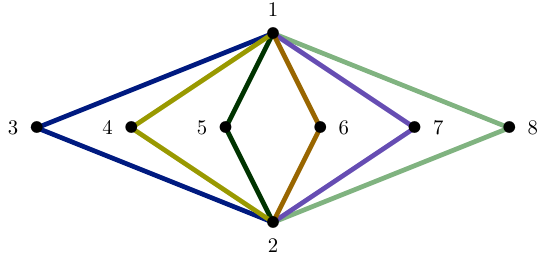}
\caption{An illustration of the graph of~\cref{onlyex} with $n=8$. This graph contains cycles of length four only, however, its nest-set width is seven.}
\label{figure3}
\end{figure}

A future direction of research is to investigate whether it is possible to give a characterization of nest-set width in terms of the ``size'' of the $\beta$-cycles in the hypergraph.

\medskip

As we detail in the next section, if the nest-set gap of the signed hypergraph is bounded, then the pseudo-Boolean polytope has a polynomial-size extended formulation.
Hence, we would like to understand the complexity of checking whether the nest-set gap of a hypergraph is bounded. Notice that by~\eqref{relation}, if the nest-set width is bounded, so is the nest-set gap.
The complexity of checking the nest-set width of a hypergraph was settled in~\cite{lanz23}. Lanzinger proves that deciding whether ${\rm nsw} (G) \leq k$ for any integer $k$ is NP-complete (see Theorem~10 in~\cite{lanz23}). However, when parameterized by $k$, this problem is fixed-parameter tractable:

\begin{proposition}[Theorem~15 in~\cite{lanz23}]\label{checknsw}
There exists a $2^{O(k^2)}\poly(|V|,|E|)$ time algorithm that takes as input hypergraph $G=(V,E)$ and integer $k \geq 1$ and returns a nest-set elimination order $\N$ with ${\rm nsw}_{\N}(G) =k$ if one exists, or rejects otherwise.
\end{proposition}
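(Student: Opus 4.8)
The plan is to prove the proposition by a bounded-depth branching algorithm that repeatedly extracts a single nest-set of cardinality at most $k$, the correctness of which rests on a \emph{confluence} property of such extractions. The first ingredient is an \emph{exchange lemma}: if $M$ and $N$ are both nest-sets of a hypergraph $G$, then $N \setminus M$ is a nest-set of $G - M$ and $M \setminus N$ is a nest-set of $G - N$. This follows directly from the defining condition~\eqref{nestsetminus}: for an edge $e$ with $e \cap (N \setminus M) \neq \emptyset$ we have $e \cap N \neq \emptyset$, and $(e \setminus M) \setminus (N \setminus M) = (e \setminus N) \setminus M$ belongs to the family obtained by removing $M$ from each member of the totally ordered family $\{e' \setminus N : e' \in E,\ e' \cap N \neq \emptyset\}$, which stays totally ordered (as does any sub-family after also discarding the members that shrink below cardinality two). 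Iterating the exchange lemma along a nest-set elimination order yields the \emph{confluence lemma}: if $G$ admits a nest-set elimination order $\N = N_1,\dots,N_t$ with ${\rm nsw}_{\N}(G) \le k$ and $M$ is any nest-set of $G$ with $|M| \le k$, then $G - M$ admits a nest-set elimination order of width at most $k$. I would prove this by induction on $t$, distinguishing whether $N_1 \subseteq M$ (in which case $M \setminus N_1$ is a nest-set of $G - N_1$ and one recurses on $G - N_1$) or $N_1 \not\subseteq M$ (in which case $N_1 \setminus M$ is a nest-set of $G - M$, one prepends it, and recurses on $(G - M) - (N_1 \setminus M) = (G - N_1) - (M \setminus N_1)$), all parts remaining of cardinality at most $k$.

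Given confluence, the top-level algorithm is greedy: maintain a current hypergraph $H$, initialized to $G$, and the partial order built so far; repeatedly search for a nest-set $N$ of $H$ with $|N| \le k$, append $N$ to the order and replace $H$ by $H - N$; if $V(H) \neq \emptyset$ but no such $N$ exists, reject; once $V(H) = \emptyset$, output the concatenation. Correctness is exactly the content of confluence: if ${\rm nsw}(G) \le k$, then applying confluence once per extracted part shows that $G - N_1 - \cdots - N_i$ still admits a nest-set elimination order of width at most $k$, so its first part is a nest-set of cardinality at most $k$ and the search never fails; conversely, if the search fails on a nonempty $H$, the same reasoning forces ${\rm nsw}(G) > k$.

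It remains to find a nest-set of cardinality at most $k$ in a hypergraph $H$ (or certify that none exists) in FPT time, which is where the exponential-in-$k$ factor enters. For each node $v \in V(H)$ taken as an \emph{anchor}, grow a set $N$ starting from $\{v\}$: at each stage, if $\{e \setminus N : e \in E(H),\ e \cap N \neq \emptyset\}$ is totally ordered, return $N$; otherwise choose two edges $e,e'$ witnessing a violation, so that both $(e \setminus e') \setminus N$ and $(e' \setminus e) \setminus N$ are nonempty, and branch on $N := N \cup (e \setminus e')$ versus $N := N \cup (e' \setminus e)$, pruning a branch the moment $|N| > k$. Each step strictly enlarges $N$, so every root-to-leaf path has length at most $k$, the search tree has at most $2^{k}$ leaves, and each node costs $\poly(|V|,|E|)$. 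Completeness: if $H$ has a nest-set $M$ with $|M| \le k$, then anchoring at any $v \in M$ and always following the branch that keeps $N \subseteq M$ — available because $M$ being a nest-set forces $e \setminus e' \subseteq M$ or $e' \setminus e \subseteq M$ for the chosen witness pair — reaches a nest-set of cardinality at most $|M| \le k$. Over the at most $|V|$ extractions, each with $|V|$ anchors, the total running time is $2^{O(k)}\poly(|V|,|E|)$, which in particular lies within the stated $2^{O(k^2)}\poly(|V|,|E|)$ bound.

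The main obstacle is the confluence lemma: it is what lets the top-level algorithm be greedy rather than branch on the choice of first nest-set at every level, which would give a recursion tree of uncontrolled depth. The exchange lemma underlying it is only a short computation with~\eqref{nestsetminus}, but the induction requires care to verify that the deletion operation — which drops edges falling below cardinality two — never breaks total orderedness and that every part in the reconstructed order keeps cardinality at most $k$. The branching subroutine and its completeness proof are then routine once one fixes the witness-pair branching rule described above.
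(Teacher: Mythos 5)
Your proof is correct. Note that the paper itself offers no proof of this proposition: it is imported verbatim as Theorem~15 of Lanzinger \cite{lanz23}, so there is no in-paper argument to compare against; your reconstruction follows the same overall strategy as Lanzinger's original — a greedy elimination loop whose correctness rests on a confluence property of nest-set deletion, plus an FPT branching subroutine that either finds a nest-set of cardinality at most $k$ or certifies none exists. All three ingredients check out. The exchange lemma is indeed a one-line computation with \eqref{nestsetminus}: since $(e\setminus M)\setminus(N\setminus M)=(e\setminus N)\setminus M$, the relevant family for $N\setminus M$ in $G-M$ is a subfamily of the chain $\{e\setminus N\}$ with a fixed set subtracted from every member, which stays a chain (in fact this half only uses that $N$ is a nest-set, not $M$). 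The confluence induction is sound via the identity $(G-N_1)-(M\setminus N_1)=(G-M)-(N_1\setminus M)$, with the degenerate cases $M=N_1$ and $M\subseteq N_1$ handled as you indicate. The branching rule is exhaustive because for any nest-set $M\supseteq N$ and any incomparable witness pair $e,e'$, comparability of $e\setminus M$ and $e'\setminus M$ forces $e\setminus e'\subseteq M$ or $e'\setminus e\subseteq M$; combined with pruning at size $k$ this gives depth at most $k$ and at most $2^k$ leaves per anchor. Your analysis even yields the sharper bound $2^{O(k)}\poly(|V|,|E|)$, which of course implies the stated $2^{O(k^2)}\poly(|V|,|E|)$. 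The only cosmetic mismatch is with the phrasing ``${\rm nsw}_{\N}(G)=k$'' in the statement: your algorithm returns an order of width at most $k$, which is what the cited theorem actually guarantees and what is used later in the paper.
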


As~\cref{secex} indicates, there exist hypergraphs $G = (V,E)$ with
large nest-set width (e.g. ${\rm nsw}(G) = \theta(|V|)$) but with
fixed nest-set gap. At the time of this writing, the complexity of checking whether the nest-set gap of a hypergraph is bounded is an open question.

\subsection{The pseudo-Boolean polytope of signed hypergraphs with bounded nest-set gaps}
\label{sec:poly}

In this section, we obtain polynomial-size extended formulations for the pseudo-Boolean polytope of signed hypergraphs whose underlying hypergraphs contain $\beta$-cycles. 
Recall that the nest-set gap is zero, if and only if the hypergraph is $\beta$-acyclic.
We argue that the nest-set gap constitutes a notion of ``distance'' from the hypergaph acyclicity and we prove that if this distance is bounded, then the pseudo-Boolean polytope admits a polynomial-size extended formulation.
Our result serves as a significant generalization of~\cref{extended}. 

To obtain the new extended formulation, we make use of two tools developed in~\cite{dPKha23}. 
The first result is concerned with \emph{decomposability} of the pseudo-Boolean polytope $\PBP(H)$. 
Let $H=(V,S)$ be a signed hypergraph and let $V_1,V_2 \subseteq V$ such that $V = V_1 \cup V_2$, let $S_1 \subseteq \{s \in S : e_s \subseteq V_1\}$, $S_2 \subseteq \{s \in S : e_s \subseteq V_2\}$ such that $S = S_1 \cup S_2$.
Let $H_1 := (V_1,S_1)$ and $H_2 := (V_2,S_2)$. We say that $\PBP(H)$ is \emph{decomposable} into $\PBP(H_1)$ and $\PBP(H_2)$, if the system comprised of a description of $\PBP(H_1)$ and a description of $\PBP(H_2)$, is a description of $\PBP(H)$.

Consider a signed hypergraph $H = (V,S)$, let $s=(e_s, \eta_s) \in S$, 
and let $v \in e_s$.
In the following, we denote by $s - v$ the signed edge $s'=(e_{s'},\eta_{s'})$, where $e_{s'}:= e_s \setminus \{v\}$, and $\eta_{s'}$ is the restriction of $\eta_s$ that assigns to each $v \in e_{s'}$ the sign $\eta_{s'}(v) = \eta_s(v)$.
The next theorem provides a sufficient condition for decomposability of the pseudo-Boolean polytope.

\begin{theorem}[Theorem~1 in~\cite{dPKha23}]
\label{th decomp}
Let $H = (V,S)$ be a signed hypergraph, and assume that it has a nest point $v$.
Let $s_1, s_2, \dots, s_k$ be the signed edges of $H$ whose underlying edges contain $v$, and assume $e_{s_1} \subseteq e_{s_2} \subseteq \cdots \subseteq e_{s_k}$.
Assume that $S$ contains the signed edges $s_i - v$ such that $|e_{s_i} - v| \ge 2$, for every $i \in [k]$.
Then $\PBP(H)$ is decomposable into $\PBP(H_1)$ and $\PBP(H_2)$, where $H_1$ and $H_2$ are defined as follows.
$H_1 := (V_1, S_v \cup P_v)$, where $V_1$ is the underlying edge of $s_k$, $S_v := \{s_1,\dots,s_k\}$, $P_v := \{s_i - v : |e_{s_i} - v| \ge 2, \ i \in [k]\}$, and $H_2 := H - v$.
\end{theorem}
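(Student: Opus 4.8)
The plan is to prove decomposability of $\PBP(H)$ by a direct "pushing variables through" argument that mirrors the standard proof that nest points give rise to polytope decompositions. Let me outline the main steps.

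\medskip

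\textbf{Setup and goal.} Let $v$ be the nest point, let $s_1 \subseteq s_2 \subseteq \cdots \subseteq s_k$ be the signed edges of $H$ containing $v$, and recall that by hypothesis $S$ also contains each $s_i - v$ with $|s_i - v| \ge 2$. Write $H_1 = (V_1, S_v \cup P_v)$ and $H_2 = H - v$ as in the statement. The inclusion $\PBP(H) \subseteq \{z : z \text{ satisfies the descriptions of } \PBP(H_1) \text{ and } \PBP(H_2)\}$ is immediate, since the projection of any integer point of $\PBS(H)$ onto the variables of $H_i$ lies in $\PBS(H_i)$, and taking convex hulls preserves this. So the content is the reverse inclusion: every $z$ satisfying both descriptions lies in $\PBP(H)$.

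\medskip

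\textbf{Main step: lifting.} I would take a point $z$ in the intersection of the two polytopes and show it is a convex combination of points of $\PBS(H)$. Since $z$ restricted to the variables of $H_2 = H-v$ lies in $\PBP(H_2)$, write it as a convex combination $\sum_t \lambda_t w^t$ of points $w^t \in \PBS(H_2)$. The key observation is that for each signed edge $s_i$ containing $v$, the product constraint $z_{s_i} = \prod_{u \in s_i} \sigma_{s_i}(z_u)$ can be rewritten using $z_{s_i - v}$: we have $\prod_{u \in s_i} \sigma_{s_i}(z_u) = \sigma_{s_i}(z_v) \cdot z_{s_i - v}$ (interpreting $z_{s_i - v}$ as $\sigma_{s_i}(z_u)$ of the single remaining variable when $|s_i - v| = 1$, or as $1$ when $s_i = \{v\}$). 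Because $v$ is a nest point and $s_1 \subseteq \cdots \subseteq s_k$, the values $z_{s_1 - v} \ge z_{s_2 - v} \ge \cdots$ are nested (as products over nested sets of $0/1$-consistent values), so on each atom $w^t$ the partition of "which $s_i - v$ are satisfied" is an initial segment. One then splits each atom $w^t$ further according to the value of $z_v$: set $z_v = 1$ on a $\sigma$-fraction and $z_v = 0$ on the complementary fraction, chosen so that the average of $\sigma_{s_i}(z_v) z_{s_i-v}^t$ matches $z_{s_i}$ simultaneously for all $i$ — this is possible precisely because of the nested structure, which lets a single scalar $z_v$ value (or a two-point distribution on $z_v$) realize all the $z_{s_i}$ values at once. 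Extending each resulting point to all signed edges of $H$ by the product formula yields points of $\PBS(H)$, and their convex combination is $z$.

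\medskip

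\textbf{Anticipated obstacle.} The delicate part is verifying that a \emph{single} distribution over the value of $z_v$ (conditioned on an atom $w^t$) can simultaneously reproduce all the coordinates $z_{s_1}, \dots, z_{s_k}$, and that the weights patch together consistently across atoms to give exactly $z$ on the overlap variables (the nodes of $V_1$ and the edges in $P_v$, which are shared between $H_1$ and $H_2$). This is where the nest-point hypothesis and the assumption that all the "shadow" edges $s_i - v$ are present in $S$ are used essentially: the shadows force the $z_{s_i-v}$ coordinates to already be consistent with a distribution on $V_1$, and the nesting $s_1 \subseteq \cdots \subseteq s_k$ makes the map $z_v \mapsto (\sigma_{s_i}(z_v) z_{s_i-v})_{i}$ have a range rich enough (after convexification in $z_v$) to hit the target. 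I would handle the bookkeeping by first proving a one-variable lemma: given nested values $a_1 \ge a_2 \ge \cdots \ge a_k$ in $[0,1]$ and target values $b_i$ with $b_i \le a_i$ and $a_i - b_i$ also nested appropriately, there is a distribution on a single $0/1$ variable $x$ with $\mathbb{E}[\sigma(x) a_i] = b_i$ for all $i$ — and then apply it atom-by-atom. Everything else is routine verification that the product-formula extension lands in $\PBS(H)$ and that convex combinations aggregate correctly.
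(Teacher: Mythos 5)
The paper itself does not prove this statement; it imports it verbatim as Theorem~1 of~\cite{dPKha23}, so there is no in-paper proof to compare against. Your overall architecture is the right one: the forward inclusion is indeed immediate, and the reverse inclusion should be proved by decomposing $z$ restricted to the variables of $H_2$ and of $H_1$ into convex combinations of points of $\PBS(H_2)$ and $\PBS(H_1)$, and gluing the two along the shared variables $z_u$ for $u\in V_1\setminus\{v\}$ and $z_{s_i-v}$ for $s_i-v\in P_v$. However, the load-bearing structural claim in your argument is false in the signed setting. You assert that on a $0/1$ atom the shadow values satisfy $z_{s_1-v}\ge z_{s_2-v}\ge\cdots$, so that ``which $s_i-v$ are satisfied'' is an initial segment of the chain. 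This holds only when all signs are $+1$ (the multilinear case). Here the $s_i$ are nested only as \emph{underlying} edges; their sign maps are unrelated, and $H$ may contain parallel signed edges through $v$. For instance, with $s_1=(\{v,a\},\eta_{s_1}(a)=+1)$ and $s_2=(\{v,a,b\},\eta_{s_2}(a)=-1,\eta_{s_2}(b)=+1)$ the shadow indicators are $x_a$ and $(1-x_a)x_b$, which are incomparable, and two parallel edges with complementary signs have shadows summing to one rather than being ordered. Since your one-variable lemma (monotone $a_1\ge\cdots\ge a_k$, a single literal $\sigma$) is built on this monotonicity, the key step --- that a single conditional distribution for $z_v$ reproduces all $z_{s_i}$ simultaneously, and that the $H_1$- and $H_2$-decompositions induce the \emph{same} distribution on the shadow vector --- is not justified as written.

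What actually makes the gluing work is a weaker but correct property: the vector $T(x)=\bigl(\prod_{u\in s_i-v}\sigma_{s_i}(x_u)\bigr)_{i\in[k]}$ has affinely independent support as $x$ ranges over $\{0,1\}^{V_1\setminus\{v\}}$. Distinct sign patterns on the same underlying shadow edge are mutually exclusive indicators, and if the pattern of $s_i-v$ is matched then $x$ is pinned on the underlying edge of $s_i$ minus $v$ and hence on every smaller such edge; ordering coordinates by decreasing underlying edge puts the possible values of $T$ in echelon form, so at most $k+1$ values occur and they are affinely independent. Only then is the law of $T$ determined by its mean vector, which is fixed by the shared coordinates $z_{s_i-v}$ (this is precisely where the hypothesis $P_v\subseteq S$ enters); only then do the two decompositions agree on the law of $T$, so that one may sample $x_v$ conditionally on $T$ according to the $H_1$-decomposition and verify all coordinates of $z$ are reproduced. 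Your proposal needs this lemma in place of the initial-segment claim; as written it establishes only the unsigned (multilinear) special case of the theorem.
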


The second result provides a polynomial-size extended formulation for the pseudo-Boolean polytope of a special type of signed hypergraphs, which we refer to as ``pointed.'' 
Consider a signed hypergraph $H=(V,S)$ and let $v \in V$ be a nest point of $H$. 
Denote by $S_v$ the set of all signed edges $s \in S$ such that $v \in e_s$.
Define $P_v:=\{s-v: s \in S_v, |e_s|\geq 3\}$.
We say  that $H$ is \emph{pointed} at $v$ (or is a \emph{pointed signed hypergraph}) if $V$ coincides with the underlying edge of the signed edge of maximum cardinality in $S_v$ and $S = S_v \cup P_v$.

\begin{theorem}[Theorem~2 in~\cite{dPKha23}]
\label{pointedHull}
Let $H=(V,S)$ be a pointed signed hypergraph. 
Then the pseudo-Boolean polytope $\PBP(H)$ has a polynomial-size extended formulation with at most $2|V|(|S|+1)$ variables and at most $4(|S|(|V|-2)+|V|)$ inequalities. Moreover, all coefficients and right-hand side constants in the system defining 
$\PBP(H)$ are $0,\pm 1$.
\end{theorem}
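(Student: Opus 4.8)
The plan is to obtain the extended formulation by a disjunction on the nest point $v$, reducing $\PBP(H)$ in one step to the pseudo-Boolean polytope of the node-deleted signed hypergraph $H-v=(V\setminus\{v\},P_v)$, and then to deal with the latter by an analogous recursion that is tractable because its underlying edges are totally ordered by inclusion. Concretely, I would first write $\PBS(H)$ as the union of its slices $\{z\in\PBS(H):z_v=0\}$ and $\{z\in\PBS(H):z_v=1\}$. On either slice, the defining equation of each $s\in S_v$ turns into a product of exactly two quantities: $\sigma_s(z_v)$, which is the affine function $z_v$ or $1-z_v$ of the fixed coordinate $z_v$, times $\prod_{w\in s\setminus\{v\}}\sigma_s(z_w)$, which equals the coordinate $z_{s-v}$ when $|s|\ge 3$ (since then $s-v\in P_v\subseteq S$) and equals the literal $\sigma_s(z_w)$ when $s=\{v,w\}$. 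I would therefore propose the following description of $\PBP(H)$: an extended formulation of $\PBP(H-v)$ on the coordinates $\{z_w:w\in V\setminus\{v\}\}\cup\{z_p:p\in P_v\}$ (together with its auxiliary variables), the bounds $0\le z_v\le 1$, and, for each $s\in S_v$, the four inequalities $z_s\ge 0$, $z_s\le\sigma_s(z_v)$, $z_s\le t_s$, $z_s\ge\sigma_s(z_v)+t_s-1$, where $t_s$ denotes $z_{s-v}$ if $|s|\ge 3$ and $\sigma_s(z_w)$ otherwise; these four inequalities are exactly the facets of $\conv\{(a,b,c)\in\{0,1\}^3:c=ab\}$, and all their coefficients and right-hand sides lie in $\{0,\pm 1\}$.

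It remains to produce the extended formulation of $\PBP(H-v)$. Since the underlying edges of $P_v$ are the images of the totally ordered underlying edges of $S_v$ under deletion of $v$, they are again totally ordered by inclusion, so $H-v$ belongs to the class of signed hypergraphs whose underlying edges form a chain. For such a signed hypergraph $H'=(W,F)$ I would apply the very same recipe: a node $u$ in the smallest underlying edge of $F$ lies in every edge and is a nest point, so splitting $\PBS(H')$ on $z_u$ turns each defining equation into a two-factor product, and one is left with $\PBP(H'-u)$ (again a chain signed hypergraph, with one fewer node) plus one product-gadget per edge of $F$. Running this as an induction on the number of nodes, with base case a single node or a single underlying edge (where $\PBP$ is the convex hull of the graph of a single monomial and is cut out by its standard linearization, using at most $|V|+2$ inequalities with $\{0,\pm 1\}$ coefficients), the auxiliary variables and inequalities accumulate additively over at most $|V|$ levels, each level contributing $O(|S|)$ auxiliary variables and $O(|S|)$ inequalities; a careful count then gives the stated totals of at most $2|V|(|S|+1)$ variables and at most $4(|S|(|V|-2)+|V|)$ inequalities, and the $\{0,\pm 1\}$ property is preserved at every level.

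The crux, and the step I expect to be the main obstacle, is the correctness of the disjunctive step: showing that the description above cuts out exactly $\PBP(H)$ and not a strictly larger polytope. One inclusion is immediate, since every listed inequality is valid for $\PBS(H)$. For the other, given a point satisfying the description, I would use the (inductively constructed) extended formulation of $\PBP(H-v)$ to write its $\big((z_w)_{w\in V\setminus\{v\}},(z_p)_{p\in P_v}\big)$-part as a convex combination of integral points of $\PBS(H-v)$, and then argue that the scalar $z_v\in[0,1]$ can be distributed over that convex combination so as to be simultaneously consistent with every product-gadget; the fact that the edges are totally ordered is what makes the quantities $t_s$ co-monotone and allows such a consistent distribution. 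This lifting argument is close in spirit to the decomposability machinery underlying \cref{th decomp}, but it is not a formal consequence of it---indeed, for a pointed $H$, \cref{th decomp} yields only the vacuous decomposition of $\PBP(H)$ into $\PBP(H)$ and $\PBP(H-v)$---so making it rigorous, together with pinning down the constants to the claimed bounds, is where the bulk of the work lies.
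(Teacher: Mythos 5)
First, a point of reference: the paper does not prove \cref{pointedHull} at all --- it is imported verbatim as Theorem~2 of the cited work on the pseudo-Boolean polytope --- so your proposal has to stand on its own. It does not: the step you yourself flag as the crux (exactness of the one-shot disjunction on $z_v$ together with a three-variable product gadget per signed edge) is not merely unproven, it is false. The decisive obstruction is parallel signed edges, which are central to the pseudo-Boolean setting (the inflation step in the proof of \cref{conj1} manufactures pointed signed hypergraphs with up to $2^{k+1}|S|$ parallel signed edges on a chain of underlying edges). Take $V=\{v,w\}$ and two parallel signed edges $s_1,s_2$ with underlying edge $\{v,w\}$ and sign patterns $(+,+)$ and $(+,-)$ respectively; this is a pointed signed hypergraph with $P_v=\emptyset$, so your description consists exactly of $0\le z_v\le 1$ and the two McCormick gadgets $z_{s_1}\le z_v$, $z_{s_1}\le z_w$, $z_{s_1}\ge z_v+z_w-1$, $z_{s_1}\ge 0$ and $z_{s_2}\le z_v$, $z_{s_2}\le 1-z_w$, $z_{s_2}\ge z_v-z_w$, $z_{s_2}\ge 0$. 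The point $z_v=z_w=z_{s_1}=z_{s_2}=1/2$ satisfies all of these, yet every point of $\PBS(H)$ satisfies the valid equality $z_{s_1}+z_{s_2}=z_v$, which this point violates; hence it lies outside $\PBP(H)$. Gadgets that treat each signed edge in isolation cannot see the coupling among parallel signed edges with different sign patterns, and no "co-monotone distribution" argument can rescue a formulation that admits points outside the polytope.

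Two further symptoms point to the same diagnosis. The claimed bound of $2|V|(|S|+1)$ variables is far larger than the $|V|+|S|$ coordinates your McCormick system lives in; this is the signature of a Balas-type extended formulation expressing $\PBP(H)$ as the convex hull of a union of polynomially many simpler pieces of $\PBS(H)$ determined by the chain $s_1\subseteq\cdots\subseteq s_k$ of signed edges through $v$, not of a product-gadget recursion in (essentially) the original space. And your recursion on $H-v$ is looser than stated: $H-v=(V\setminus\{v\},P_v)$ is a chain signed hypergraph but is not pointed (eliminating a node $u$ of its smallest underlying edge requires the signed edges $p-u$ for $p\in P_v$, which are not in $P_v$), so each level of the recursion changes the hypergraph and re-encounters the same exactness problem, while the arithmetic leading to the exact constants $2|V|(|S|+1)$ and $4(|S|(|V|-2)+|V|)$ is never carried out. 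The valid-relaxation direction of your argument is fine, but the converse inclusion --- the entire content of the theorem --- is missing and, as the example shows, cannot hold for the formulation you propose.
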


We are now ready to state the main result of this section.

\begin{theorem}\label{conj1}
Let $H=(V,S)$ be a signed hypergraph of rank $r$ whose underlying hypergraph $G=(V,E)$ satisfies ${\rm nsg}(G) \leq k$. Then the pseudo-Boolean polytope $\PBP(H)$ has an extended formulation with $O(r 2^k|V||S|)$ variables and inequalities.
In particular, if $k \in O(\log \poly(|V|, |E|))$, then $\PBP(H)$ has a polynomial-size extended formulation. Moreover, all coefficients and right-hand side constants in the system defining $\PBP(H)$ are $0,\pm 1$.
\end{theorem}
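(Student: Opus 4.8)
The plan is to mirror the proof of \cref{extended} from~\cite{dPKha23}, replacing the single nest-point elimination order (which exists precisely for $\beta$-acyclic hypergraphs) with a nest-set elimination order $\N = N_1, \dots, N_t$ witnessing ${\rm nsg}_{\N}(G) \le k$, and to handle each nest-set $N_i$ by first \emph{splitting} it into its constituent nodes one at a time. Concretely, I would process the nest-sets in the order $N_1, N_2, \dots, N_t$. At stage $i$, write $N_i = \{u_1, \dots, u_p\}$ and observe that, since $N_i$ is a nest-set of the current hypergraph $G_i := G - N_1 - \cdots - N_{i-1}$, the collection $\{e \setminus N_i : e \in E(G_i),\ e \cap N_i \ne \emptyset\}$ is totally ordered. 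This does not immediately make any single $u_j$ a nest point of $G_i$, but it does after inflation: for every signed edge $s$ of the current signed hypergraph whose underlying edge $e$ meets $N_i$, inflate $s$ to $e \cup N_i$. Since ${\rm nsg}_{\N}(G) \le k$, we have $|N_i| - |e \cap N_i| \le k$ for every such $e$, so each inflation step adds at most $k$ nodes to the underlying edge; by \cref{lem inflation}, this costs only a factor $2^k$ in the size of the formulation and preserves the $0,\pm 1$ coefficient property. After these inflations, every underlying edge meeting $N_i$ contains all of $N_i$, so $u_1, u_2, \dots, u_p$ is a nest-point elimination order on the $N_i$-portion.

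Next, for each $u_j$ in turn I would invoke the decomposition theorem \cref{th decomp}: before applying it, add to the signed hypergraph the ``projected'' signed edges $s - u_j$ of cardinality at least two for every $s$ containing $u_j$ (these are accounted for in the size bound and are exactly the edges $P_v$ in the statement of \cref{th decomp}). \cref{th decomp} then decomposes the current $\PBP$ into $\PBP(H_1)$, where $H_1$ is \emph{pointed} at $u_j$, and $\PBP(H_2) = \PBP(H - u_j)$. By \cref{pointedHull}, $\PBP(H_1)$ has an explicit polynomial-size extended formulation with $0, \pm 1$ coefficients. I would then recurse on $H_2$, which corresponds to continuing with $u_{j+1}, \dots, u_p$ and then moving to $N_{i+1}$; note that deleting $u_j$ from the signed hypergraph is compatible with the fact that $N_i \setminus \{u_1, \dots, u_j\}$ is still a nest-set of $G_i - u_1 - \cdots - u_j$ and that $N_{i+1}, \dots, N_t$ remains a nest-set elimination order of the deleted hypergraph (here I would need a small lemma: deleting a node and then taking a nest-set elimination order of the rest is fine, or more precisely, that the gap bound is inherited under node deletion, which is immediate from~\eqref{Hgap} since deleting nodes only shrinks the relevant edges). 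Iterating over all $u_j$ and all $N_i$ exhausts $V$, and juxtaposing the $O(|S|)$ pointed pieces — each of size $O(r|V|)$ after the $2^k$ blow-up from inflation — gives the claimed $O(r 2^k |V||S|)$ bound. The polynomial-size conclusion for $k \in O(\log\poly(|V|,|E|))$ is then immediate since $2^k$ is polynomial, and the $0,\pm 1$ property is preserved at every step (inflation adds only equalities of the form~\eqref{eq inflation sum}, decomposition just concatenates systems, and \cref{pointedHull} already has $0,\pm 1$ coefficients).

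The main obstacle I anticipate is the bookkeeping around the interaction between inflation and the nest-set structure: after inflating to clear $N_i$, one must verify that the signed hypergraph one recurses on (after peeling off $u_1, \dots, u_p$) still has a nest-set elimination order of gap at most $k$ on the remaining node set — in particular that the inflations performed at stage $i$ do not destroy the nest-set property of $N_{i+1}$ in $G_{i+1}$. I believe this holds because the inflations at stage $i$ only modify edges meeting $N_i$, and after deleting all of $N_i$ these edges are replaced by their traces, which is exactly the hypergraph $G_{i+1}$ for which $N_{i+1}$ is a nest-set by hypothesis; but making this precise requires carefully tracking that the signed edges $I(s, e \cup N_i)$ introduced by inflation, once $N_i$ is deleted via the decomposition, collapse back to (signed versions of) $e \setminus N_i$. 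A second, more minor point to get right is the ordering of operations within a single nest-set: one must add the projection edges $s - u_j$ \emph{and} ensure the inflated edges are nested through $u_j$, so the cleanest route is probably to do \emph{all} the stage-$i$ inflations first (clearing the whole of $N_i$), and only then run the $p$ successive applications of \cref{th decomp}, exactly as in the $\beta$-acyclic proof but with $N_i$ playing the role of a single ``super nest point.''
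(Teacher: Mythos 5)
Your proposal follows essentially the same route as the paper's proof: inflate every signed edge meeting $N_i$ to its union with $N_i$ (with the gap bound ${\rm nsg}_\N(G)\le k$ giving the $2^k$ blow-up via \cref{lem inflation}), observe that all nodes of $N_i$ thereby become nest points, peel them off one at a time with \cref{th decomp} and \cref{pointedHull}, and note that after deleting $N_i$ the inflated parallel edges collapse back so the edge count returns to at most $|S|$ before moving to $N_{i+1}$. The argument is correct, including the "obstacle" you flag, which the paper resolves exactly as you suggest.
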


\begin{proof}
Denote by $\N= N_1, \cdots, N_t$ a nest-set elimination order of $G$ with ${\rm nsg}_{\N}(G) \leq k$. 
Consider the nest-set $N_1$. 

Assume $N_1=\{v\}$ for some $v \in V$.
Then, $v$ is a nest point of $G$. 
Denote by $S_{v}$ the set of all signed edges $s$ of $H$ with $v \in e_s$. 
By definition, the set $S_{v}$ is totally ordered.
Define the signed hypergraph $H'_1:=(V,S_v \cup P_{v})$, where $P_{v}:=\{s-v: s\in S_{v}, |e_s| \geq 3\}$.  
Clearly, an extended formulation for $\PBP(H'_1)$ serves as an extended formulation for $\PBP(H)$ as well.
Now define the pointed signed hypergraph $H_{v}:= (V_1, S_{v}\cup P_{v})$, where $V_1$ denotes the underlying edge of a signed edge of maximum cardinality in $S_{v}$. We then have $H'_1 = H_{v} \cup (H-v)$, where we used the fact that $H'_1-v= H-v$. Hence by Theorem~\ref{th decomp}, the pseudo-Boolean polytope $\PBP(H'_1)$ is decomposable into pseudo-Boolean polytopes $\PBP(H_{v})$ and $\PBP(H-v)$, and a polynomial-size extended formulation of $\PBP(H_{v})$ exists by \cref{pointedHull}.

Now assume $|N_1|\geq 2$; let $S_1 \subseteq S$ denote the set of signed edges whose underlying edges contain some $v \in N_1$. 
Consider $s = (e_s, \eta_s) \in S_1$, and define $\bar e := e_s \cup N_1$. Inflate $s$ to $\bar e$; repeat a similar inflation operation for every $s \in S_1$ to obtain a new signed hypergraph $\bar H=(V,\bar S)$. It then follows that $\bar H$ satisfies two key properties: 
\begin{itemize}
\item [(i)] $|\bar S| \leq 2^k |S|$, since by assumption ${\rm nsg}_{\N}(G) \leq k$, implying that $\gap(G, N_1) \leq k$,
\item [(ii)] all nodes in $N_1$ are nest points of $\bar H$. To see this, denote by $\bar G$ the underlying hypergraph of $\bar H$ and denote by
$\bar F$ the set of all edges in $\bar G$ containing some node in $N_1$. By definition of a nest-set, $\{f \setminus N_1 : f \in \bar F\}$ is totally ordered. Moreover, by the inflation operation defined above, we have $f \cap N_1 = N_1$ for all $f \in \bar F$. It then follows that $\bar F$ is totally ordered as well; hence, all nodes in $N_1$ are nest points of $\bar G$. 
\end{itemize}
By~\cref{lem inflation} and property~(i) above, by obtaining a polynomial-size extended formulation for $\PBP(\bar H)$, we obtain a polynomial-size extended formulation for $\PBP(H)$ as well.
Now consider a node $v_1 \in N_1$.
Since $v_1$ is a nest point of $\bar G$, we can use the technique described above to decompose $\PBP(\bar H)$ into $\PBP(\bar H_{v_1})$ and $\PBP(\bar H-v_1)$, where as before $\bar H_{v_1}$ is a signed hypergraph pointed at $v_1$. Moreover, denoting by $\bar S_{v_1}$ the number of signed edges in $\bar H_{v_1}$ whose underlying edge contains $v_1$, we have $\bar S_{v_1} \leq 2^k |S|$.
Next consider the signed hypergraph $\bar H-v_1$; it follows that any node $v \in N_1 \setminus \{v_1\}$ is a nest point of $\bar H-v_1$.
Hence we can apply our decomposition technique recursively to all nest points in $N_1$ to decompose $\PBP(\bar H)$ into $\PBP(\bar H-N_1)$ and $\PBP(\bar H_{v})$ for all $v \in N_1$, where $\PBP(\bar H_{v})$ is a pointed signed hypergraph at $v$ with at most $2^{k+1}|S|$ edges for all $v \in N_1$. Since the nodes in $N_1$ are contained in the same signed edges of $\bar H$ obtained by inflating any edge in $H$ containing some node in $N_1$ to their union, it follows that the number of signed edges of $\bar H-N_1$ is upper bounded by $|S|$. Next, we apply the above inflation and decomposition technique to the signed hypergraph $\bar H-N_1$ together with the nest-set $N_2$. Repeating this argument $t$ times for all $N_i \in \N$, we conclude that an extended formulation for $\PBP(H)$ is obtained by juxtaposing extended formulations of pointed signed hypergraphs $\PBP(\bar H_{v_i})$ for all $v_i \in V$, together with $t$ equalities of the form~\eqref{eq inflation sum}.

Now consider the pointed signed hypergraph $\bar H_{v_i}$ for some $v_i \in V$. Denote by $V_i$ the node set of $\bar H_{v_i}$, denote by $\bar S_{v_i}$ the set of signed edges whose underlying edge contains $v_i$, and let $\bar P_{v_i}$ denote the remaining signed edges of $\bar H_{v_i}$.
By Theorem~\ref{pointedHull} the polytope $\PBP(\bar H_{v_i})$ has an extended formulation consisting of at most $2|V_i| (|S_{v_i}|+|P_{v_i}|+1) \leq 2 r(2^{k}|S|+2^{k}|S|+1)$ variables and at most $4 (|S_{v_i}|+|P_{v_i}|)(|V_i|-2)+4 |V_i| \leq (r-2)2^{k+3}|S|+4|V|$ inequalities, where the inequalities follow since $|V_i| \leq r$ and $|\bar P_{v_i}| \leq |\bar S_{v_i}| \leq 2^k |S|$. Therefore, $\PBP(H)$ has an extended formulation with $O(r 2^k|V||S|)$ variables and inequalities.
Moreover, by~\cref{pointedHull} and~\cref{lem inflation} all coefficients and right-hand side constants in the system defining this extended formulation are $0,\pm 1$.
\end{proof}

Notice that~\cref{extended} is a special case of~\cref{conj1} obtained by letting $k=1$. We should remark that the proof of~\cref{conj1} indicates that if a nest-set elimination ordering $\N$ of $G$ with ${\rm nsg}_{\N} (G) \in O(\log \poly(|V|, |E|))$ is available, then one can construct an extended formulation for $\PBP(H)$ in polynomial-time, implying the polynomial-time solvability of the corresponding pseudo-Boolean optimization problem.
Using inequality~\eqref{relation}, we obtain the following result regarding the pseudo-Boolean polytope of signed hypergraphs with bounded nest-set width:

\begin{corollary}\label{corconj1}
Let $H=(V,S)$ be a signed hypergraph of rank $r$ whose underlying hypergraph $G=(V,E)$ satisfies ${\rm nsw}(G) \leq k$. Then the pseudo-Boolean polytope $\PBP(H)$ has an extended formulation with $O(r 2^k|V||S|)$ variables and inequalities.
In particular, if $k \in O(\log \poly(|V|, |E|))$, then $\PBP(H)$ has a polynomial-size extended formulation. Moreover, all coefficients and right-hand side constants in the system defining $\PBP(H)$ are $0,\pm 1$.
\end{corollary}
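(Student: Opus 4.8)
The plan is to derive \cref{corconj1} directly from \cref{conj1} via the inequality~\eqref{relation}, which bounds the nest-set gap by the nest-set width minus one. Concretely, suppose the underlying hypergraph $G=(V,E)$ of $H$ satisfies ${\rm nsw}(G) \leq k$. By definition, ${\rm nsw}(G)$ is the minimum of ${\rm nsw}_{\N}(G)$ over all nest-set elimination orders $\N$, so there is a nest-set elimination order $\N$ with ${\rm nsw}_{\N}(G) \leq k$. Applying~\eqref{relation} to this particular $\N$ gives ${\rm nsg}_{\N}(G) \leq {\rm nsw}_{\N}(G) - 1 \leq k - 1 \leq k$. Since ${\rm nsg}(G)$ is the minimum of ${\rm nsg}_{\N}(G)$ over all nest-set elimination orders, it follows that ${\rm nsg}(G) \leq k$.

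Now I would simply invoke \cref{conj1}: since $H$ has rank $r$ and its underlying hypergraph satisfies ${\rm nsg}(G) \leq k$, the pseudo-Boolean polytope $\PBP(H)$ has an extended formulation with $O(r 2^k |V| |S|)$ variables and inequalities, and moreover all coefficients and right-hand side constants in the defining system are $0, \pm 1$. The claim about the polynomial-size extended formulation when $k \in O(\log \poly(|V|, |E|))$ is likewise immediate from the corresponding statement in \cref{conj1}, since the same asymptotic hypothesis on $k$ is in force.

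There is essentially no obstacle here — the corollary is a one-line consequence of \cref{conj1} and the structural inequality~\eqref{relation}. The only point requiring mild care is to make sure the quantifiers are handled correctly: it is \emph{not} true in general that ${\rm nsg}(G) \leq {\rm nsw}(G) - 1$ via minimizing both sides independently (the minimizing orders could differ), but since~\eqref{relation} holds for \emph{every} nest-set elimination order $\N$ simultaneously, evaluating both sides at an order achieving ${\rm nsw}_{\N}(G) \leq k$ is legitimate and yields ${\rm nsg}(G) \leq k$ as needed. With that observation in place, the proof is complete.

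\begin{proof}
Since ${\rm nsw}(G) \leq k$, there exists a nest-set elimination order $\N$ of $G$ with ${\rm nsw}_{\N}(G) \leq k$. By~\eqref{relation} applied to this $\N$, we have ${\rm nsg}_{\N}(G) \leq {\rm nsw}_{\N}(G) - 1 \leq k-1$. Hence ${\rm nsg}(G) \leq {\rm nsg}_{\N}(G) \leq k$. The result now follows directly from \cref{conj1}.
\end{proof}
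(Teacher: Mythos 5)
Your proof is correct and matches the paper's own (essentially one-line) derivation: the corollary is obtained from \cref{conj1} via inequality~\eqref{relation}. Your extra care with the quantifiers—applying~\eqref{relation} to a single order $\N$ achieving ${\rm nsw}_{\N}(G)\leq k$ rather than comparing the two minima directly—is a valid and welcome clarification of exactly the step the paper leaves implicit.
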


Recall that by~\cref{checknsw}, if $k \in O(\sqrt{\log \poly(|V|, |E|)})$, constructing a nest-set elimination ordering $\N$ with ${\rm nsw}_{\N}(G) \leq k$, if one exists, can be performed in polynomial-time. 
This in turn implies that, for signed hypergraphs with ${\rm nsw}(G)\in O(\sqrt{\log \poly(|V|, |E|)})$, the extended formulation in ~\cref{corconj1} can be constructed in polynomial time and, as a result, \cref{prob PBO} can be solved in polynomial-time.  
However, for the regime $k \in \omega(\sqrt{\log \poly(|V|, |E|)})$ and $k \in O(\log \poly(|V|, |E|))$, while by~\cref{corconj1} the extension complexity of the pseudo-Boolean polytope is polynomially bounded, and \cref{prob PBO} can be solved in polynomial-time if a corresponding nest-set elimination ordering is given, the complexity of constructing a nest-set elimination ordering $\N$ such that ${\rm nsw}_{\N}(G) \leq k$ remains an open question.

Let us examine the connections between~\cref{{th: alphaPoly}},~\cref{th PBO hard}, and~\cref{corconj1}. From Lemma~9 of~\cite{lanz23} it follows that for a hypergraph $G$
of rank $r$ we have 
$$\tw(G) \leq {\rm nsw}(G) (r-1).$$
Hence, if the rank $r$ is fixed, then~\cref{corconj1} is implied by~\cref{{th: alphaPoly}} as in this case, bounded nest-set width implies bounded treewidth. However, if $r$ is not fixed, then one can consider hypergraphs with bounded nest-set width and unbounded treewidth in which case~\cref{{th: alphaPoly}} is not applicable, while~\cref{corconj1} gives polynomial-size extended formulations for the pseudo-Boolean polytope.     

\medskip

We conclude this paper by remarking that in~\cite{dPKha23}, the authors present polynomial-size extended formulations for the pseudo-Boolean polytope of certain signed hypergraphs with ``log-poly gaps.'' To this end, they employ a two-step approach: In the first step, the inflation operation is invoked to ``remove" $\beta$-cycles from the underlying hypergraph of the signed hypergraph. In the second step, \cref{extended} is used to obtain a polynomial-size extended formulation for the pseudo-Boolean polytope of the newly constructed signed hypergraph whose underlying hypergraph is $\beta$-acyclic. The log-poly gap assumption ensures that the inflation operation adds at most polynomially many new edges to the signed hypergraph (see propositions~3 and~4 in~\cite{dPKha23}). This proof technique cannot be applied to prove~\cref{conj1}, because the structure of hypergraphs with bounded nest-set gaps is too complicated and the $\beta$-cycles cannot be removed in ``one shot.'' To prove this theorem, we need to apply the inflation operation in a \emph{recursive} manner. That is, after each decomposition step, a new nest-set is identified and an inflation operation is applied accordingly to obtain a set of nest points, which translates into the removal of a subset of $\beta$-cycles. 
However, we should remark that the first step of the above two-step approach can also be applied prior to the application of~\cref{conj1}, hence extending its scope. For example, if the signed hypergraph contains a long $\beta$-cycle with a large nest-set gap but satisfies the assumptions of proposition~3 in~\cite{dPKha23}, one can first use the inflation operation to effectively remove the $\beta$-cycle. If the resulting hypergraph has a bounded nest-set gap, then~\cref{conj1} can be used to obtain a polynomial-size extended formulation for the pseudo-Boolean polytope.  Obtaining a meta-theorem that fully characterizes the class of signed hypergraphs for which by utilizing the inflation operation, one can obtain a polynomial-size extended formulation for the pseudo-Boolean polytope is an interesting topic of future research. 

\bigskip
\noindent
\textbf{Acknowledgments:}
The authors would like to thank Matthias Lanzinger for fruitful discussions about nest-sets.

\bigskip
\noindent
\textbf{Funding:}
A. Del Pia is partially funded by AFOSR grant FA9550-23-1-0433. 
A. Khajavirad is in part supported by AFOSR grant FA9550-23-1-0123.
Any opinions, findings, and conclusions or recommendations expressed in this material are those of the authors and do not necessarily reflect the views of the Air Force Office of Scientific Research.

\ifthenelse {\boolean{SIOPT}}
{
\bibliographystyle{siamplain}
}
{
\bibliographystyle{plain}
}


\end{document}